\newcommand{\C}{\mathbb{C} }
\newcommand{\R}{\mathbb{R} }
\newcommand{\Z}{\mathbb{Z} }
\newcommand{\T}{\mathbb{T} }
\newcommand{\Q}{\mathbb{Q} }
\newcommand{\N}{\mathbb{N} }
\newcommand{\eof}[1]{e \inparentheses{#1}}
\newcommand{\FT}[1]{\mathcal{F}_{{#1}}}
\newcommand{\inparentheses}[1]{\left( #1 \right)}
\renewcommand{\vector}[1]{{\boldsymbol{#1}}}
\newcommand{\form}{\mathcal{Q}}
\newcommand{\kernel}{K}
\newcommand{\dd}{\; {\rm d}}
\newcommand{\dm}{\; \mathrm{d}m}
\newcommand{\dalpha}{\; \mathrm{d}\alpha}
\newcommand{\major}{\mathfrak{M}}		
\newcommand{\frakM}{\mathfrak{M}}		
\newcommand{\frakm}{\mathfrak{m}}		
\DeclareMathOperator{\dilate}{D}
\newtheorem{proposition}{Proposition}
\newtheorem{lemma}{Lemma}
\newtheorem{theorem}{Theorem}
\newtheorem{conjecture}{Conjecture}
\newtheorem*{Magyar}{Magyar's Decomposition Theorem}
\newtheorem*{Schmidt}{Schmidt's Theorem}
\theoremstyle{definition}
\theoremstyle{remark}
\newtheorem{remark}{Remark}[section] 
\title{Supercritical discrete restriction estimates for forms in many variables}
\author{Brian Cook}
\address{
    Department of Mathematics
	\\	Virginia Tech
	\\	McBryde Hall
	\\	225 Stanger Street
	\\	Blacksburg, VA 24061-1026
	\\	USA
}
\email{briancookmath@gmail.com}
\author{Kevin Hughes}
\address{
    School of Mathematics
	\\	The University of Bristol
	\\	Fry Building
	\\	Woodland Road
	\\	Bristol, BS8 1UG
	\\	UK
	\\ and the Heilbronn Insitute for Mathematical Research, Bristol, UK
}
\email{khughes.math@gmail.com}
\author{Eyvindur Palsson}
\address{
    Department of Mathematics
	\\	Virginia Tech
	\\	McBryde Hall
	\\	225 Stanger Street
	\\	Blacksburg, VA 24061-1026
	\\	USA
}
\email{palsson@vt.edu}
\begin{document}
\maketitle

\begin{abstract}
We prove discrete restriction estimates for a broad class of hypersurfaces and varieties of intermediate codimension. 
For our result about hypersurfaces, we use Bourgain's arithmetic version of the Tomas--Stein method and Magyar's decomposition of Birch's hypersurfaces. 
For our result about varieties of higher codimension, we use the even moment method and works of Birch and Schmidt. 
\end{abstract}

\section{Introduction}

In this paper, we consider discrete restriction estimates associated to integral, positive definite, homogeneous forms; let $\form(\vector{x}) \in \Z[\vector{x}]$ be such a homogeneous form, where $\vector{x} = (x_1,x_2,\dots,x_d)$ with $d \geq 2$ and $k$ denotes the degree of the form $\form$. 
We assume that $k \geq 2$. 
For each $\lambda \in \R$, the polynomial $\form$ cuts out a real variety $V_{\form=\lambda}(\R) := \{\vector{x} \in \R^d : \form(\vector{x}) = \lambda\}$ containing a discrete set of integral points $V_{\form=\lambda}(\Z) := \{\vector{x} \in \Z^d : \form(\vector{x}) = \lambda\}$; either or both of these sets are possibly empty depending on the value of $\lambda$. 
For instance, since $\form$ is positive definite, $V_\form(\R)$ is empty for negative $\lambda$. 
Also, $V_{\form=\lambda}(\Z)$ is always empty for non-integral values of $\lambda$. 

Momentarily fix the form $\form$ so that we may suppress it from the notation below. 
For $\lambda \in \N$ and functions $a : \Z^d \to \C$, define the arithmetic extension operator 
\[
E_\lambda a(\vector{\xi}) 
:= 
\sum_{\vector{x} \in V_{\form=\lambda}(\Z^d)} a(\vector{x}) \eof{\vector{x} \cdot \vector{\xi}}.
\]
Letting $\omega_\lambda := {\bf 1}_{ V_{\form=\lambda}(\Z^d)}$, 
we have 
\(
E_\lambda a(\vector{\xi}) 
= 
\FT{\Z^d}({a\cdot \omega_\lambda})(\vector{\xi})
\) 
where $\FT{\Z^d}$ is the Fourier transform defined on complex-valued functions with domain $\Z^d$. 
In other words, $E_\lambda$ is the adjoint to the restriction operator $R_{\lambda}f$ defined as 
\[
f \mapsto R_{\lambda}f := \FT{\T^d}({f}) \cdot \omega_\lambda
\]
for functions $f : \T^d \to \C$. 
The extension operator is trivial when the variety has no integer points; that is, when $V_{\form=\lambda}(\Z^d)$ is the empty set. 
Consequently, we are interested in situations where the variety has many integer points. 
Due to a theorem of Birch, there is a natural setting for these operators which we now review. 

Define the \emph{Birch singular locus} of the form $\form$ as the complex variety 
\[ V_\form^\dagger(\C) := \{\vector{x} \in \C^d :  \nabla \form(\vector{x}) = \vector{0}\}. \]
Let $\dim_\C(V)$ denote the algebraic dimension of a complex variety $V$. 
We will say that a homogeneous, integral form is \emph{regular} if it satisfies Birch's criterion:
\begin{equation}\label{Birch_criterion}
d-\dim_\C(V_\form^\dagger(\C)) > (k-1)2^k.
\end{equation}
When \eqref{Birch_criterion} is satisfied, Birch \cite{Birch} tells us that there exists an infinite arithmetic progression $\Gamma_{\form}$ in $\N$ depending on the form $\form$ such that for each $\lambda \in \Gamma_{\form}$, there exists a positive constant $C_\form(\lambda)$ with the property that 
\begin{equation}\label{asymptotic:Birch}
N_{\form}(\lambda) 
:= \#\{ \vector{n} \in \Z^d : \form(\vector{n}) = \lambda \} 
= C_{\form}(\lambda) \lambda^{\frac{d}{k}-1} + O_\form(\lambda^{\frac{d}{k}-1-\delta})
> 0;
\end{equation} 
for some positive $\delta$ depending on the form $\form$. 
Moreover, there exists constants $c_2>c_1>0$ such that $c_1 \leq C_\form(\lambda) \leq c_2$ for all $\lambda \in \Gamma_{\form}$. 
Based on Birch's asymptotic \eqref{asymptotic:Birch} and on the usual heuristics of the circle method one expects the following estimates. 
\begin{conjecture}\label{conjecture1}
Let $\form$ be an integral, positive definite, homogeneous form of degree $k \geq 2$ in $d > 2k$ variables. 
For each $1 \leq p \leq \infty$ and $\epsilon>0$, there exists a positive constant $C_{\form,p,\epsilon}$ such that 
\begin{equation}\label{estimate:conjecture:quadratic}
\|E_\lambda a\|_{L^p(\T^d)} 
\leq C_{\form,p,\epsilon}
\lambda^\epsilon (1+\lambda^{\frac{d-k}{2k}-\frac{d}{kp}}) \|a\|_{\ell^2(\Z^d)}.
\end{equation}
For $k \geq 3$ we further conjecture that one may remove the $\epsilon$-loss; that is, for each $1 \leq p \leq \infty$, there exists a constant $C_{\form,p}$ such that 
\begin{equation}\label{estimate:conjecture:higher}
\|E_\lambda a\|_{L^p(\T^d)} 
\leq C_{\form,p}
(1+\lambda^{\frac{d-k}{2k}-\frac{d}{kp}}) \|a\|_{\ell^2(\Z^d)}.
\end{equation}
\end{conjecture}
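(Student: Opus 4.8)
We sketch a route toward Conjecture~\ref{conjecture1} that settles it away from one critical exponent and isolates the remaining difficulty there.

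\emph{Reduction to a single exponent.} Put $p_c := \tfrac{2d}{d-k}$, the unique $p$ at which $\tfrac{d-k}{2k}-\tfrac{d}{kp}=0$; since $d>2k$ we have $2<p_c<\infty$. Two endpoints hold trivially \emph{at those two values of $p$}. At $p=2$, Parseval on $\T^d$ gives $\|E_\lambda a\|_{L^2(\T^d)} = \|a\cdot\omega_\lambda\|_{\ell^2(\Z^d)} \le \|a\|_{\ell^2(\Z^d)}$, matching \eqref{estimate:conjecture:quadratic} since there $1+\lambda^{(d-k)/(2k)-d/(2k)}\asymp1$. At $p=\infty$, the triangle inequality and Cauchy--Schwarz give $\|E_\lambda a\|_{L^\infty(\T^d)} \le \|a\cdot\omega_\lambda\|_{\ell^1(\Z^d)} \le N_\form(\lambda)^{1/2}\|a\|_{\ell^2(\Z^d)}$, which by \eqref{asymptotic:Birch} is $\ll\lambda^{(d-k)/(2k)}\|a\|_{\ell^2(\Z^d)}$, matching the bracket at $\tfrac{d}{kp}=0$. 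These two alone interpolate only to a lossy bound in between, so the content of the conjecture is concentrated at the critical exponent $p_c$. Conversely, once $\|E_\lambda a\|_{L^{p_c}(\T^d)}\ll_\epsilon\lambda^\epsilon\|a\|_{\ell^2(\Z^d)}$ is known, interpolating it against the $p=2$ endpoint yields \eqref{estimate:conjecture:quadratic} for $2\le p\le p_c$ (where the bracket is $\asymp1$), and against the $p=\infty$ endpoint yields it for $p_c\le p\le\infty$ with the stated power of $\lambda$. So the whole conjecture reduces to the single estimate at $p=p_c$.

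\emph{The critical exponent via arithmetic Tomas--Stein.} For the endpoint I would run Bourgain's arithmetic Tomas--Stein argument. By $TT^*$, $\|E_\lambda\|_{\ell^2(\Z^d)\to L^{p_c}(\T^d)}^2$ is the $L^{p_c'}(\T^d)\to L^{p_c}(\T^d)$ operator norm of convolution with the Weyl sum $K_\lambda(\theta):=\sum_{\vector{x}\in V_{\form=\lambda}(\Z^d)}\eof{\vector{x}\cdot\theta}$. Using Magyar's decomposition of Birch's hypersurface, split $K_\lambda=\sum_{q\text{ dyadic}}K_\lambda^{(q)}+\mathcal E_\lambda$, with $K_\lambda^{(q)}$ built from the major arcs of denominator $\asymp q$ and $\mathcal E_\lambda$ a minor-arc error bounded via Birch's Weyl-sum estimates and the saving $\delta$ in \eqref{asymptotic:Birch}. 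Each $K_\lambda^{(q)}$ factors, up to an arithmetic (singular-series) weight of size $\ll q^{-\eta}$ whose sum over $q$ converges because \eqref{Birch_criterion} is so strong, as a $\lambda^{1/k}$-rescaled, smoothly truncated Fourier extension from the real variety $V_{\form=1}(\R)$; feeding this into the continuous Tomas--Stein theory for that hypersurface --- whose input is the nontrivial decay of the archimedean density, part of Birch's singular-integral analysis --- bounds convolution with $K_\lambda^{(q)}$ by $q^{-\eta}\,\lambda^{(d-k)/k-2d/(kp_c)}=q^{-\eta}\,\lambda^0$. Summing over $q$, adding the minor arcs, and (only when $k=2$) absorbing the genuine divisor-type fluctuation of the lattice-point count, produces the factor $\lambda^\epsilon$ and, in principle, \eqref{estimate:conjecture:quadratic}.

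\emph{Removing the $\epsilon$, and the main obstacle.} For \eqref{estimate:conjecture:higher} with $k\ge3$ one can instead use the even-moment method at $p=2s$ with $2s>p_c$: expanding $\|E_\lambda a\|_{L^{2s}(\T^d)}^{2s}$ and applying Cauchy--Schwarz reduces to the diagonal sumset bound $\max_{\vector{n}}\#\{(\vector{x}_1,\dots,\vector{x}_s)\in V_{\form=\lambda}(\Z^d)^s:\vector{x}_1+\cdots+\vector{x}_s=\vector{n}\}\ll\lambda^{((s-1)d-sk)/k}$, which, once $d$ is large enough, follows from the asymptotics of Birch and Schmidt for the associated system of equations; here the boundedness of $C_\form$ in \eqref{asymptotic:Birch} is exactly what removes the $\epsilon$, and the exclusion of $k=2$ reflects the genuine fluctuations on quadrics. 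The main obstacle is reaching $p_c$ itself without loss: the even-moment method only sees even integers $2s>p_c$, and interpolating down reintroduces suboptimal powers of $\lambda$ just above $p_c$ whenever $p_c\notin2\Z$; while the Tomas--Stein route needs a continuous restriction estimate for $V_{\form=1}(\R)$ that is \emph{uniform} as one approaches the Birch singular locus $V_\form^\dagger(\C)$ --- near which $V_{\form=1}(\R)$ can flatten --- and that uniformity must survive every piece of Magyar's decomposition. Quantifying the restriction theory near the singular locus sharply enough to land exactly at $p=p_c$ with only an $\epsilon$-loss, and with none when $k\ge3$, is the crux, and is where I expect genuinely new input to be required.
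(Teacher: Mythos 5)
You are attempting a statement that the paper itself does not prove: Conjecture~\ref{conjecture1} is stated as an open problem (open even for the sphere, as the introduction notes), and the paper's actual theorem in this direction, Theorem~\ref{theorem:main}, only reaches the supercritical range $p>2+\frac{2k}{\gamma_\form}$, far above $p_c=\frac{2d}{d-k}$. Your write-up candidly admits the same: the reduction to the critical exponent via the $p=2$ and $p=\infty$ endpoints and interpolation is correct but is not the content of the conjecture, and the critical estimate itself is left as ``where I expect genuinely new input to be required.'' So as a proof attempt this has a gap by construction; what remains is to check whether the two routes you sketch could plausibly close it, and they cannot as stated.

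Concretely: (a) in the Tomas--Stein branch, the assertion that convolution with each major-arc piece $K_\lambda^{(q)}$ is bounded on $L^{p_c'}\to L^{p_c}$ by $q^{-\eta}\lambda^{0}$ does not follow from the available continuous theory. For a general regular form the only known decay of the surface measure is $|\widehat{d\sigma_\form}(\vector{\xi})|\lesssim_\epsilon(1+|\vector{\xi}|)^{1-\kappa_\form+\epsilon}$ with $\kappa_\form$ possibly barely exceeding $2$, and feeding this into Tomas--Stein yields only $p>2+\frac{4}{\kappa_\form-2}$ --- exactly the paper's Theorem~\ref{theorem:majorarcs}, hence the supercritical range of Theorem~\ref{theorem:main}; reaching $p_c$ would require essentially full curvature decay, which is precisely what fails near the Birch singular locus and is the open problem, not an input. (b) In the even-moment branch, the uniform count $\sup_{\vector{n}}\#\{(\vector{x}_1,\dots,\vector{x}_s)\in V_{\form=\lambda}(\Z^d)^s:\vector{x}_1+\cdots+\vector{x}_s=\vector{n}\}\lesssim\lambda^{((s-1)d-sk)/k}$ does not follow from Birch or Schmidt: the relevant system is $(\form(\vector{x}_1),\dots,\form(\vector{x}_s))$, whose Jacobian is block \emph{diagonal}, so its singular locus is the union of the sets $\{\vector{x}_i\in V_\form^\dagger\}$ and its Birch rank is only $\mathcal{B}(\form)$, not $s\,\mathcal{B}(\form)$ (Lemma~\ref{lemma2} applies to the single form $\form(\vector{x}_1)+\cdots+\form(\vector{x}_s)$, not to this system of $s$ equations at the common value $\lambda$); after restricting to the codimension-$d$ affine space $\vector{x}_1+\cdots+\vector{x}_s=\vector{n}$, Lemma~\ref{lemma1} leaves at best $\mathcal{B}(\form)-d(s+1)$, which is negative since $\mathcal{B}(\form)\leq d$. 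This degeneracy is exactly why the paper's even-moment result, Theorem~\ref{theorem:images}, is formulated for graphs $(\vector{n},\form^{(2)}(\vector{n}),\dots,\form^{(k)}(\vector{n}))$ rather than for level sets of a single form. So neither branch closes the conjecture, and the honest comparison is that your Tomas--Stein sketch, carried out rigorously, reproduces the paper's supercritical Theorem~\ref{theorem:main} rather than Conjecture~\ref{conjecture1}.
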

\noindent 
The critical exponent here is $p_c := \frac{2d}{d-k}$ where the two summands in \eqref{estimate:conjecture:quadratic} or \eqref{estimate:conjecture:higher} balance. 

There are two trivial estimates for Conjecture~\ref{conjecture1}. 
The first trivial estimate is the $\ell^2 \to L^2$ estimate which is furnished by Plancherel's theorem. 
The second trivial estimate is the $\ell^2 \to L^\infty$ estimate furnished by the Cauchy--Schwarz inequality and \eqref{asymptotic:Birch} when the latter is known to hold. 
Conjecture~\ref{conjecture1} has been intensively studied in the quadratic case, especially for the spherical case $\form(\vector{x}) := x_1^2+\cdots+x_d^2$. 
Even for the sphere, this problem remains open despite major recent advances in the area. 
See \cite{Bourgain:eigenfunctions,Bourgain:lattice_point_problems,Bourgain:moment_inequalities,BD:improved_sphere,BD:sphere_4D5D,BD} for more information regarding the spherical case and \cite{BD:Hypersurf} for other quadratic hypersurfaces. 
However, for a general form $\form$ of higher degree, there are no hitherto known nontrivial estimates towards this problem. 

In connection with results for discrete restriction to the sphere, A. Magyar (personal communication) asked how to incorporate minor arc estimates for higher degree Diophantine equations in order to obtain discrete restriction estimates. 
Our first result answers Magyar's question for a broad class of hypersurfaces and yields \eqref{estimate:conjecture:higher} when $p$ and $d$ are both sufficiently large. 
In particular, $p$ will be much larger than the critical exponent $p_c$. 
To state our first result, we introduce a relevant parameter. 
For a regular, homogeneous, integral form $\form$ of degree $k$ in $d$ variables define the parameter
\begin{equation*}
\gamma_{\form} := \frac{1}{6k} \left( \frac{d-\dim(V_\form^\dagger(\C))}{(k-1)2^k} - 1 \right)
\end{equation*}
Throughout we assume that $d$ is sufficiently large with respect to $k$ to satisfy the Birch--Magyar regularity criterion \eqref{Birch_criterion} with $\gamma_\form>0$. 
In particular, this implies that $d>2k$. 
\begin{theorem}\label{theorem:main}
Let $\form$ be a regular, positive definite, homogeneous, integral form in $d$ variables of degree $k \geq 2$. 
If $p > 2+\frac{2k}{\gamma_\form}$, then \eqref{estimate:conjecture:higher} holds for $\lambda \in \N$. 
\end{theorem}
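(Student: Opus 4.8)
The plan is to prove the $\ell^2\to L^p$ estimate for large $p$ by interpolating between the trivial $\ell^2\to L^2$ bound (Plancherel) and an $\ell^2\to L^\infty$-type bound that is better than the Cauchy--Schwarz bound by a power of $\lambda$, extracted from minor-arc savings. Concretely, write the measure $\omega_\lambda$ via a circle-method / Magyar-type decomposition $\omega_\lambda = \sum_{\text{major arcs}} + \text{error}$: the major-arc part should, after Fourier inversion, look like a weighted sum of Gauss-sum-modulated pieces supported near rationals $\mathbf a/q$ with small $q$, plus a genuinely small $L^\infty$ error coming from the minor arcs where Birch's / Weyl-type exponential sum estimates give a power-saving $\lambda^{-\gamma_\form \cdot (\text{something})}$. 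The parameter $\gamma_\form$ is exactly calibrated (note the $\frac{1}{6k}$ and the $2^k$) so that Birch's minor-arc bound for $\sum_{|\mathbf n|\le\lambda^{1/k}} e(\alpha\,\form(\mathbf n))$ produces a gain of this size.

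The key steps, in order: (1) Use Magyar's Decomposition Theorem (cited in the abstract/excerpt) to write $E_\lambda a = \sum_j E_\lambda^{(j)} a + (\text{error operator})$, where the $j$-th piece is a localized-frequency operator associated to a dyadic range of denominators and the error operator has small operator norm $\ell^2\to L^2$ with a favorable $\lambda$-power. Actually more to the point, the decomposition should be arranged to split the kernel into a "high-frequency / minor-arc" term $\mathcal F_{\Z^d}^{-1}$ of which is pointwise $O(\lambda^{d/k - 1 - \eta})$ for some $\eta>0$ tied to $\gamma_\form$, and a "low-frequency / major-arc" term. (2) For the minor-arc term, Young's inequality / direct estimation gives $\ell^2\to L^\infty$ with norm $\lesssim \lambda^{(d-k)/(2k) - \eta/2}$ or similar — strictly better than the critical $L^\infty$ exponent — because the kernel is uniformly small. (3) For the major-arc term, exploit that it is essentially a finite superposition of rescaled continuous extension operators (Littman / the oscillatory integral estimate for the smooth hypersurface $\form=1$), for which classical Fourier-analytic restriction-type bounds on $\R^d$ hold; summing over the (boundedly many, or geometrically decaying) arcs keeps control. (4) Interpolate: Plancherel gives the endpoint at $p=2$ with the right power, the minor-arc $L^\infty$ bound gives a super-critical endpoint, and Riesz--Thorin then yields \eqref{estimate:conjecture:higher} for every $p$ exceeding the threshold $2 + 2k/\gamma_\form$, with no $\epsilon$-loss precisely because the minor-arc estimate is a clean power saving rather than a divisor-bound-type estimate.

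The main obstacle I expect is Step (3)–(4): controlling the major-arc contribution in $L^p$ uniformly in $\lambda$ and showing the arcs can be summed without an $\epsilon$-loss. The naive bound over major arcs near $\mathbf a/q$ carries Gauss-sum factors $q^{-d}|S(\mathbf a,q)|$ which one must sum over $q$; getting absolute convergence (hence no $\lambda^\epsilon$) requires $d$ large relative to $k$, which is exactly where the regularity hypothesis and largeness of $d$ are spent, and requires the singular series $\frak{S}_\form(\lambda)$ to be bounded — consistent with the stated bounds $c_1\le C_\form(\lambda)\le c_2$. A secondary technical point is that the exponent threshold $p>2+2k/\gamma_\form$ must be shown to be exactly what the interpolation produces: if the minor-arc $L^\infty$ endpoint is $\|E_\lambda a\|_\infty \lesssim \lambda^{(d-k)/(2k)}\cdot\lambda^{-\gamma_\form/(?)}\|a\|_2$, then interpolating with $L^2$ to land on the conjectured power $\lambda^{(d-k)/(2k) - d/(kp)}$ forces $p$ past $2+2k/\gamma_\form$; verifying this arithmetic and checking that the major-arc term is dominated by the same power in this range is the crux of the matter. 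The high-degree case ($k\ge 3$) removing the $\epsilon$ should then follow because Birch's estimates, unlike Gauss-sum estimates for quadratics, come with a true power saving.
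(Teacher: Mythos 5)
Your overall architecture (Magyar's decomposition into major arcs plus a uniformly small minor-arc error, separate treatment of the major-arc main term, largeness of $d$ spent on summing the Gauss-sum factors) matches the paper's, but the mechanism you propose for combining the pieces has a genuine gap. The decomposition lives at the level of the $TT^*$ kernel $F=\FT{\Z^d}(\omega_\lambda)=F_\frakM+F_\frakm$, i.e.\ it splits the convolution operator $f\mapsto F\ast f$, not the extension operator $E_\lambda$ itself. The minor-arc saving is an $L^\infty$ bound on the exponential sum $F_\frakm$ (estimate \eqref{estimate:error_bound}/\eqref{estimate:minorarcs}), and this does \emph{not} yield your claimed improved $\ell^2\to L^\infty$ bound for a ``minor-arc part'' of $E_\lambda$: to get $\|E^\frakm a\|_\infty\lesssim\|a\|_2\,\lambda^{\frac{d-k}{2k}-\eta/2}$ by Cauchy--Schwarz you would need an $L^2$ (mean-value) bound for $F$ over the minor arcs that is smaller than $N_\form(\lambda)^{1/2}$, and Birch/Magyar give no such thing -- only the sup bound. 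Moreover, even granting such an endpoint, Riesz--Thorin between a bound for the full operator at $p=2$ and a bound for only the minor-arc piece at $p=\infty$ is not a legitimate interpolation, and interpolating any pair of $\ell^2\to L^q$ bounds produces exponents of the form $\beta(1-\tfrac 2p)$, which can never equal the target power $\lambda^{\frac{d-k}{2k}-\frac{d}{kp}}$; that power has to come from the major-arc convolution estimate $\|F_\frakM\ast f\|_{p_1}\lesssim\lambda^{\frac dk-1-\frac{2d}{kp_1}}\|f\|_{p_1'}$ at some supercritical $p_1$, which is an $L^{p_1'}\to L^{p_1}$ statement, not an $\ell^2\to L^q$ one.

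What is missing is the device that combines the three inputs (Plancherel at $p=2$, the major-arc bound at $p_1>p_c$, and the minor-arc sup bound) without loss: the paper uses Bourgain's level-set Tomas--Stein argument (Lemma~\ref{lemma:prelims:TomasSteinDcp}), in which one dualizes on the level set $S_\alpha=\{|E_\lambda a|\ge\alpha\}$ so that the minor-arc term is paired with $\|f\|_{L^1}^2=|S_\alpha|^2$; it is exactly this pairing that converts the power saving $\gamma_\form$ into the threshold $p>2+\tfrac{2k}{\gamma_\form}$ (a naive dualization with $\|f\|_{L^1}\le\|f\|_{L^{p'}}=1$ would only give $p\gtrsim \tfrac{2d}{k\gamma_\form}$, far weaker in general). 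The major-arc estimate itself is also more than a ``finite superposition'' over arcs: in the paper it is Theorem~\ref{theorem:majorarcs}, proved by interpolating $L^1\to L^\infty$ bounds (from the decay of $\widetilde{d\sigma_\form}$ and the Birch--Weyl bound $q^{\epsilon-\kappa_\form}$ with $\kappa_\form>2$) against $L^2\to L^2$ bounds for each Magyar piece $\mu^{a/q,j}_\lambda$, and then summing over the roughly $\lambda^{2/k}$ fractions $a/q$ and over the Littlewood--Paley scales $j$; this summation, not boundedness of the number of arcs, is where regularity is spent. So the decomposition and the heuristics are right, but as written the proof does not close: you need either the level-set lemma or an equivalent $TT^*$ argument in which the minor-arc kernel is used only through $L^1\to L^\infty$.
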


Recent progress in the spherical case has come in part by new, sharp decoupling estimates starting with \cite{BD}. 
These techniques have been significantly expanded upon to handle Diophantine equations with an affine invariant structure; affine invariance is also known as translation-dilation invariance or as parabolic rescaling. 
It is important to note the setting of Theorem~\ref{theorem:main} is far from affine invariant. 

Our second result concerns a similar class of operators associated to  surfaces which take the form of a graph over $\mathbb{Q}$. 
More specifically, these operators are defined for surfaces which take the shape $\{ (\vector{n},\form^{(2)}(\vector{n}),\dots,\form^{(k)}(\vector{n})) : \vector{n} \in \Z^d \}$ where, for $2 \leq i \leq k$, $\form^{(i)}$ is a complex-linearly independent collection of $r_i$ elements of $\Z[x_1,...,x_d]$ of degree exactly $i$. 
The cutoff parameter is modified in a natural way for the operators associated to these surface: fix a function $\psi : \R^d \to \C$ which is Schwartz or continuous and compactly supported, and for $N>0$ define the truncated extension operator $E_N$ acting on functions $a : \Z^d \to \C$ as 
\[
E_N a(\vector{\xi}_1,\vector{\xi}_2,\dots,\vector{\xi}_k) 
:= 
\sum_{\vector{n} \in \Z^d} \psi(\vector{n}/N) a(\vector{n}) \eof{\vector{n} \cdot \vector{\xi}_1 + \form^{(2)}(\vector{n}) \cdot \vector{\xi}_2 + \cdots + \form^{(k)}(\vector{n}) \cdot \vector{\xi}_k }.
\]

As above, $\ell^2 \to L^p$ estimates for the extension operators $E_N$ have been intensively studied in the affine invariant setting. 
In particular, see \cite{BDG, GLY, Li, Wooley:cubic, Wooley:Restr, Wooley:nested} for the moment curve $(t,t^2,\dots,t^k)$ and related non-degenerate curves. 
For results in higher dimensions, see 
\cite{BD:surfaces_4D, BDGuo, DGS, GORYZ, Guo, GLY, GZ, GZk:ACK_systems, GZk:low_codimensions, HH:paraboloids, HH:quadratic, Oh, PPW}. 
Few results are known outside the affine invariant setting; see \cite{HuLi:Degree3, HW, LaiDing}. 
Current methods are often restricted to the case of quadratic or cubic equations of low dimension or low codimension. 
Higher degree equations present numerous new difficulties; see \cite{BrandesParsell:1,BrandesParsell:2,BHB} for a few recent advances on closely related arithmetic problems. 

We would like to formulate a conjecture for our present situation analogous to Conjecture~\ref{conjecture1}. 
The above works show that a general formulation is likely to be rather intricate. 
However, based upon these works, it seems plausible that there exists an exponent $p_0(\form) > 2$, depending on the form $\form$, such that the inequality 
\begin{equation}\label{estimate:conjecture2}
\|E_Na\|_{L^p(\T^{d+R})} 
\lesssim N^{\frac{d}{2}-\frac{D_\form}{p}} \|a\|_{\ell^2(\Z^d)}
\end{equation}
holds for $p>p_0(\form)$. 
Such an inequality would be sharp since the factor $N^{d/2-D_\form/p}$ in \eqref{estimate:conjecture2} arises as a lower bound to this problem by taking $a \equiv 1$ and counting the number of lattice points. 

In formulating our second result there are a few relevant parameters associated to $\form$ to consider. 
The first parameter is the  total number of polynomials in our system of forms; in our setup, this is $d+R_\form$ where we define $R_\form := r_2+...+r_k$ as the total number of nonlinear polynomials in our system. 
The second parameter is the \emph{total degree} of the system of forms $\form$; this is the quantity $d+2r_2+...+kr_k$, and is denoted as $D_\form$. 
Our third parameter is $r_\form=\sup_{\{2\leq i \leq k\}}r_i$. 

An additional notion that we will need is that of a type of rank for systems of polynomials. 
This is, more precisely, something which is defined in a graded fashion; a family of ranks is defined, one for each  degree. 
With $\form$ given,  define the ranks for $2\leq i \leq k$, 
\(
\mathcal{B}_i(\form)=\mathcal{B}_i(\form^{(i)})
\)
for each $i=1,...,k$, where 
\[
\mathcal{B}_i(\form^{(i)})=\{\vector{n}\in\C^d:\text{rank}(\text{Jac}_{\form_{\text{hom}}}^{(i)}(\vector{n}))<r_i\}
\]
and $\form_{\text{hom}}^{(i)}$ is top degree homogeneous part(s) of the $\form^{(i)}$. 
In the case $r_i=0$ for some $2\leq i \leq k$, we take $\mathcal{B}_i(\form^{(i)})=\infty$. Also, in the case that $\form$ is system of polynomials of common degree we will drop the subscript $i$.

Our second result is the following theorem. 
\begin{theorem}\label{theorem:images}
For each $i=2,\dots,k,$, let $\form_{hom}^{(i)}$ be a complex linearly independent collection of $r_i$ polynomials consisting of degree precisely $i$ in the $d$ variables $\vector{n} = (n_1,\dots,n_d)$. 
Let $\form(\vector{n})=(\vector{n},\form^{(2)}(\vector{n}),...,\form^{(k)}(\vector{n}))$. 
There exists constants $\chi(i,r_i,R,k)$ such that if $s\geq 2$ is an integer satisfying the inequalities 
\begin{equation}\label{Birch:even}
\mathcal{B}_i(\form) 
\geq \frac{\chi(i,r_i,R,k)+(r_i+1)d}{(r_i+s)}
\end{equation}
for $2\leq i \leq k$, then 
\eqref{estimate:conjecture2} holds when $p\geq 2(r_\form+s)$.
\end{theorem}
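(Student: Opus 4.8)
The plan is to establish \eqref{estimate:conjecture2} first at the endpoint exponent $p_0 := 2t$, where $t := r_\form + s$ is an integer, and then to obtain every $p \ge p_0$ by interpolating against the trivial bound $\|E_N a\|_{L^\infty(\T^{d+R})} \lesssim N^{d/2}\|a\|_{\ell^2(\Z^d)}$, which follows from the Cauchy--Schwarz inequality together with $\sum_{\vector n}|\psi(\vector n/N)|^2 \lesssim N^d$. Since $\T^{d+R}$ has finite measure and the exponent $\tfrac d2 - \tfrac{D_\form}{p}$ is exactly the convex combination (with weights $p_0/p$ and $1-p_0/p$) of the exponent $\tfrac d2-\tfrac{D_\form}{p_0}$ at $p_0$ and the exponent $\tfrac d2$ at $p=\infty$, Hölder's inequality reduces Theorem~\ref{theorem:images} to the single case $p = 2t$.

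For $p = 2t$ we use the even moment/circle method. Expanding $|E_N a|^{2t} = |E_N a^{t}|^2$ and grouping the $t$-fold product by the value
\[
\vector\nu \ :=\ \inparentheses{\,\sum_{j=1}^t \vector n_j,\ \sum_{j=1}^t \form^{(2)}(\vector n_j),\ \dots,\ \sum_{j=1}^t \form^{(k)}(\vector n_j)\,} \ \in\ \Z^{d+R},
\]
we write $E_N a(\vector\xi)^t = \sum_{\vector\nu} A(\vector\nu)\,\eof{\vector\nu\cdot\vector\xi}$, where
\[
A(\vector\nu) \ :=\ \sum_{\substack{\vector n_1,\dots,\vector n_t\in\Z^d \\ \sum_j \form^{(i)}(\vector n_j) = \nu^{(i)},\ 1\le i\le k}} \ \prod_{j=1}^t \psi(\vector n_j/N)\,a(\vector n_j).
\]
Plancherel on $\T^{d+R}$ gives $\|E_N a\|_{L^{2t}}^{2t} = \sum_{\vector\nu}|A(\vector\nu)|^2$, and Cauchy--Schwarz applied to each $A(\vector\nu)$, followed by summation in $\vector\nu$ (the inner sums of $\prod_j |\psi(\vector n_j/N)a(\vector n_j)|^2$ telescoping to $\inparentheses{\sum_{\vector n}|\psi(\vector n/N)|^2|a(\vector n)|^2}^t$), yields
\[
\|E_N a\|_{L^{2t}(\T^{d+R})}^{2t} \ \le\ \inparentheses{\,\sup_{\vector\nu\in\Z^{d+R}} r_t(\vector\nu;N)\,}\cdot\|\psi\|_\infty^{2t}\cdot\|a\|_{\ell^2(\Z^d)}^{2t},
\]
where $r_t(\vector\nu;N) := \#\inbraces{(\vector n_1,\dots,\vector n_t)\in(\operatorname{supp}\psi(\cdot/N))^t : \sum_j\form^{(i)}(\vector n_j) = \nu^{(i)}\text{ for }1\le i\le k}$ (if $\psi$ is only Schwartz rather than compactly supported, one first reduces to the compactly supported case using its rapid decay). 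Since $(N^{d/2 - D_\form/(2t)})^{2t} = N^{td - D_\form}$, it suffices to prove the uniform estimate $\displaystyle\sup_{\vector\nu} r_t(\vector\nu;N) \lesssim N^{td - D_\form}$.

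This bound, a Vinogradov-type mean value estimate for the system $\form$, is the heart of the matter, and is where the hypothesis \eqref{Birch:even} enters. One runs the Hardy--Littlewood circle method on $r_t(\vector\nu;N)$: on the minor arcs, Weyl-type cancellation for the exponential sums attached to the homogeneous parts $\form^{(i)}_{\mathrm{hom}}$ is quantified by the graded Birch ranks $\mathcal{B}_i(\form)$, and one proceeds degree by degree, using Hölder so that only about $r_i + s$ of the $t$ available copies need be engaged for the degree-$i$ estimate; on the major arcs one extracts a main term of size $\asymp N^{td - D_\form}$ together with a singular series $\frakS$ and singular integral $\frakJ$ that are bounded above \emph{uniformly in} $\vector\nu$ under the same rank conditions (for $\vector\nu$ outside the range in which $\frakJ \gtrsim 1$ the count $r_t(\vector\nu;N)$ is only smaller, so the supremum is still controlled). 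Keeping track of precisely when Birch's theorem (for forms of a common degree) and Schmidt's extension (for mixed degrees) apply to this $t$-fold system yields the inequalities \eqref{Birch:even} with $\chi(i,r_i,R,k)$ the resulting explicit threshold; combining with the two preceding steps proves \eqref{estimate:conjecture2} for $p \ge 2(r_\form+s)$.

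The main obstacle is this final step, in two respects. First, one needs the circle-method bound uniformly in the shift $\vector\nu$ — not merely an asymptotic valid for $\vector\nu$ in a privileged range — which forces uniform upper bounds for $\frakS$ and $\frakJ$. Second, one must convert the rank/$h$-invariant hypotheses of Birch and Schmidt for the $t$-fold system into conditions on the graded ranks $\mathcal{B}_i$ of the original forms, carefully accounting for how many of the $r_\form + s$ copies are spent meeting the $r_i$ equations of degree $i$ versus securing absolute convergence of the singular series and a power-saving minor-arc error. This accounting is exactly what pins down the constants $\chi(i,r_i,R,k)$ and the precise shape of \eqref{Birch:even}.
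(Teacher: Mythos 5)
Your reduction is sound and matches the paper's strategy: expanding the even moment $\|E_Na\|_{L^{2t}}^{2t}$, using orthogonality and Cauchy--Schwarz to reduce to the uniform counting bound $\sup_{\vector{\nu}} r_t(\vector{\nu};N)\lesssim N^{td-D_\form}$, and interpolating with the trivial $L^\infty$ bound to pass from $p=2(r_\form+s)$ to all $p\ge 2(r_\form+s)$ (this last step is fine, and is even slightly more explicit than what the paper records). The problem is that you stop exactly where the theorem actually lives. You declare the counting bound ``the heart of the matter,'' sketch a circle-method argument for it, and then state that the accounting which would produce the constants $\chi(i,r_i,R,k)$ and the precise hypothesis \eqref{Birch:even} is ``the main obstacle'' --- without carrying it out. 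As written, the hypothesis \eqref{Birch:even} is never used in any verifiable way, so the proposal does not prove the theorem; it reduces it to an unproved Vinogradov/Birch-type estimate.

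The missing content is supplied in the paper not by rerunning the circle method but by two rank lemmas combined with Schmidt's theorem quoted as a black box (Schmidt's bound is already an upper bound uniform in the shift $\vector{t}$, so your ``first obstacle'' about uniformity in $\vector{\nu}$ dissolves once you cite it). Concretely: (i) for the $t$-fold system $\form^{t\oplus}$ the Jacobian has block structure, whence $V^\dagger_{\form^{t\oplus}}\subset (V^\dagger_\form)^t$ and $\mathcal{B}_i(\form^{t\oplus})\ge t\,\mathcal{B}_i(\form)$ --- this is where the factor $(r_i+s)$ in the denominator of \eqref{Birch:even} comes from, with $t=r_\form+s$; (ii) the $d$ linear equations $\sum_j \vector{n}_j=\nu^{(1)}$ are not fed into Schmidt's theorem as equations but are used to restrict $\form^{t\oplus}$ to the affine subspace $H(\vector{\nu}^{(1)})$ of codimension $d$, and one needs the lemma that restriction to an affine subspace of codimension $m$ decreases the Birch rank of a system of $r$ forms by at most $m(r+1)$; this is exactly the source of the $(r_i+1)d$ term in \eqref{Birch:even}. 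With these two facts, $\mathcal{B}_i(\form^{t\oplus}|_{H})\ge t\,\mathcal{B}_i(\form)-(r_i+1)d\ge\chi(i,r_i,k,R)$ under \eqref{Birch:even}, and Schmidt's theorem gives $\sup_{\vector{\nu}}r_t(\vector{\nu};N)\lesssim N^{td-D_\form}$ directly. Without an argument of this kind (or an explicit execution of the circle method for the shifted $t$-fold system, which is substantially more work), your proof has a genuine gap precisely at the step that determines the shape of the hypothesis.
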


An explanation of the conditions in Theorem~\ref{theorem:images} is in order. 
A priori the only exponents $p$ for which \eqref{estimate:conjecture2} is known are the exponents $p=2$ and $\infty$; these are obtained by Plancherel's theorem and the Cauchy--Schwarz inequality respectively. 
Consequently the claimed region of $p \geq 2(r+s)$ is non-trivial for any system $(\vector{n},\form^{(2)}(\vector{n}),\dots,\form^{(k)}(\vector{n}))$. 
For instance, taking $s$ sufficiently large so that the right hand side of \eqref{Birch:even} is at most one, we obtain a non-trivial bound for systems of rank 1. 
In contrast, note that fixing the parameters $r,k,s$ and $p$ forces a lower bound on the minimal allowable dimension in Theorem~\ref{theorem:images}; in particular, we find that we require 
\[
d \geq \frac{\chi(i,r_i,R,k)}{(s-1)}.
\]

In terms of the constants $\chi$, we can take 
\[ 
\chi(i,r_i,R,k) = (i-1) 2^{3i} r_i R k. 
\] 
These are slightly weaker than those provided by \cite{Schmidt}, although hopefully more transparent. 
Schmidt's work built on work of Birch, which concerned the case when $r_i>0$ for exactly one $i\geq 2$. 
Birch's bounds in this case are those given above. 
A recent paper of Browning and Heath-Brown \cite{BHB} also approaches this question. 
The bounds given in \cite{Schmidt} do not reduce to those of \cite{Birch} stated above; the work of Browning and Heath-Brown bridges this gap.

\subsection{Organization of the paper}

The paper is organized as follows. 
Section~\ref{section:notation} sets some notation used throughout the paper. 
In Section~\ref{section:even_moment} we prove  Theorem~\ref{theorem:images} using the even moment method in \cite{Zygmund, Bourgain:discrete_restriction:NLS} and Schmidt's theorem in \cite{Schmidt}. 
In Section~\ref{section:preliminaries} we give an abstract formulation of the Tomas's method for discrete $L^2$ restriction theorems introduced in \cite{Bourgain:Squares, Bourgain:discrete_restriction:NLS} which reduces our problem to proving estimates related to the Fourier transform of the surface measure. 
In  Section~\ref{section:BirchMagyar} we recall a decomposition of the surface measure due to Magyar as well as relevant estimates from \cite{Magyar:ergodic}. 
In Section~\ref{section:majorarcs} we prove a bound for the major arc pieces by a further application of Thomas's methods and deduce Theorem~\ref{theorem:main} from this bound and the bounds of Section~\ref{section:BirchMagyar}.

\subsection*{Acknowledgements}

The authors would like to thank the Heilbronn Institute for Mathematical Research for enabling this collaboration through their Focused Research Workshop ``Efficient Congruencing and Decoupling". KH thanks Virginia Tech for their hospitality whilst part of this paper was written. EP was supported in part by Simons Foundation Grant \#360560.

\section{Notation}\label{section:notation}

We introduce here some notation that will streamline our exposition. 
\begin{itemize}
\item 
For a positive integer, we let $\mathbb{Z}/q$ denote the group of integers modulo $q$ and $U_q := \{ 1 \leq a < q : (a,q)=1 \}$ denote its unit group. 
\item
We write $f(\lambda) \lesssim g(\lambda)$ if there exists a constant $C>0$ independent of all $\lambda$ under consideration (e.g. $\lambda$ in $\N$ or in $\Gamma_{\form}$) such that 
\[
|f(\lambda)| \leq C |g(\lambda)|. 
\]
Furthermore, we will write $f(\lambda) \gtrsim g(\lambda)$ if $g(\lambda) \lesssim f(\lambda)$ while we will write $f(\lambda) \eqsim g(\lambda)$ if $f(\lambda) \lesssim g(\lambda)$ and $f(\lambda) \gtrsim g(\lambda)$.
Subscripts in the above notations will denote parameters, such as the dimension $d$ or degree $k$ of a form $\form$, on which the implicit constants may depend. 
\item
$\T^d$ denotes the $d$-dimensional torus $(\R/\Z)^d$ identified with the unit cube $[-1/2,1/2]^d$. 
\item
$*$ denotes convolution on a group such as $\Z^d$, $\T^d$ or $\R^d$. 
It will be clear from context as to which group the convolution takes place.
\item
$\eof{t}$ will denote the character $e^{-2\pi it}$ for $t \in \R$ or $\T$. 
\item
For a function $f: \Z^d \to \C$, its $\Z^d$-Fourier transform will be denoted $\FT{\Z^d}{f}(\vector{\xi})$ for $\vector{\xi} \in \T^d$. 
For a function $f: \T^d \to \C$, its $\T^d$-Fourier transform will be denoted $\FT{\T^d}{f}(\vector{x})$ for $\vector{x} \in \Z^d$. 
$\FT{\Z^d}$ and $\FT{\T^d}$ are defined so that they are inverses of one another. 
For a function $f: \R^d \to \C$, its $\R^d$-Fourier transform will be denoted $\FT{\R^d}{f}(\vector{x})$ for $\vector{x} \in \R^d$. 
\item
For a function $f: \R^d \to \C$, we define dilation operator $\dilate_{t}$ by $\dilate_{t}f(\vector{x}) = f(\vector{x}/t)$. 
\item
For a ring $R$, we will use the inner product notation $\vector{b} \cdot \vector{m}$ for vectors $\vector{b},\vector{m} \in R^d$ to mean the sum $\sum_{i=1}^d b_i m_i$. 
This is used for the rings $\R,\Z,\T$ and $\Z/q$ where $q \in \N$. 
\item
We also let ${\bf 1}_X$ denote the indicator function of the set $X$. 
\end{itemize}

\section{The even moment method and forms in many variables}
\label{section:even_moment}

In this section we give a proof of Theorem \ref{theorem:images} using the even moment method. 
This method is classical, and in connection with restriction problems, it appears at least as far back as \cite{Zygmund}. 
The additional ideas introduced here come from the study of forms in many variables, and a central result in this area is the following theorem of Schmidt. 
\begin{Schmidt}[\cite{Schmidt}]\label{Schmidt}
Let $\form=(\form^{(2)},...,\form^{(k)}) \subset \Z[\vector{x}]$ be a family of forms where $\form^{(i)}$ consists of $r_i$ polynomials of degree $i$. 
Let $D=2r_2+...+kr_k$ denote the total degree of the family $\form$. 
There exist constants $\chi(i, r_i, k, R)$ such that if 
\(
{B}_i(\form) \geq \chi(i, r_i, k, R),
\)
then 
\begin{equation}\label{estimate:Schmidt}
\#\{ \vector{n} \in \Z^d : |n_i| \leq N \; \text{for each} \; i \; \text{and} \; \form(\vector{n})=\vector{t} \}
\lesssim_{\form} N^{d-D_\form}
\end{equation}
uniformly in $\vector{t}\in\Z^R$. 
\end{Schmidt}

We need two further results. 
The first result that we will make use of is the following. 
\begin{lemma}\label{lemma1}
Let $k \geq 1$ and let $\form$ be a system of $r$ forms in $\Z[x_1,... , x_d]$, each of degree $k$. 
If $H$ is an affine linear space of co-dimension $m$, then the restriction of $\form$ to $H$ satisfies 
\[ \mathcal{B}(\form|_H)\geq \mathcal{B}(\form)-m(r+1). \]
\end{lemma}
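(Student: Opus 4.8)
The plan is to reduce to the case of a single coordinate hyperplane through the origin and then estimate dimensions on an auxiliary incidence variety. Since $\mathcal{B}$ of a system depends only on its top-degree homogeneous part, and that part of $\form|_H$ equals $\form\circ M$, where $M\colon\C^{d-m}\to\C^d$ is the injective linear map underlying an affine parametrization of $H$, I may assume $H$ is a linear subspace through the origin; and because $\mathcal{B}$ is invariant under $GL_d(\C)$-changes of the ambient coordinates, I may take $H=\{x_{d-m+1}=\cdots=x_d=0\}$. Restriction to this $H$ is the composite of $m$ restrictions to coordinate hyperplanes, and restricting a system of $r$ homogeneous forms of degree $k$ to a coordinate hyperplane again produces such a system (allowing the zero form). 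So it suffices to prove the case $m=1$: if $\widetilde{\form}$ is the restriction of a system $\form$ of $r$ forms to $\{x_d=0\}\cong\C^{d-1}$, then $\mathcal{B}(\widetilde{\form})\ge\mathcal{B}(\form)-(r+1)$.

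For the hyperplane case I would pass to the singular loci. Let $J(\vector{x})$ be the $r\times d$ Jacobian of $\form$, let $B(\form)=\{\vector{x}\in\C^d:\operatorname{rank}J(\vector{x})<r\}$, so that $\mathcal{B}(\form)=d-\dim_\C B(\form)$ and likewise $\mathcal{B}(\widetilde{\form})=(d-1)-\dim_\C B(\widetilde{\form})$; the claim becomes $\dim B(\widetilde{\form})\le\dim B(\form)+r$. For $\vector{v}\in\C^r$ set $F_{\vector{v}}=\vector{v}\cdot\form$, so $\vector{v}^{\top}J(\vector{x})=\nabla F_{\vector{v}}(\vector{x})$: then $\vector{x}\in B(\form)$ exactly when $\nabla F_{\vector{v}}(\vector{x})=\vector{0}$ for some $\vector{v}\ne\vector{0}$, while $\vector{x}'\in B(\widetilde{\form})$ exactly when $\partial_1F_{\vector{v}}(\vector{x}',0)=\cdots=\partial_{d-1}F_{\vector{v}}(\vector{x}',0)=0$ for some $\vector{v}\ne\vector{0}$ (the first $d-1$ columns of $J(\vector{x}',0)$ being the Jacobian of $\widetilde{\form}$ at $\vector{x}'$). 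The main obstacle is precisely that these two rank-drop conditions live on matrices of different widths, so $B(\widetilde{\form})$ is not simply cut out of $B(\form)$ by one equation fewer; introducing the parameter $\vector{v}$ linearizes the rank condition and lets a fibre-dimension argument absorb the discrepancy. (When $r=1$ this is the elementary remark that $Z=\{\partial_1F=\cdots=\partial_{d-1}F=0\}\subseteq\C^d$ satisfies $\dim Z\le\dim B(F)+1$, because $B(F)=Z\cap\{\partial_dF=0\}$, while $B(\widetilde{F})=Z\cap\{x_d=0\}$ has dimension at most $\dim Z$.)

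Concretely, I would work with the incidence variety
\[
\mathcal{I}=\{(\vector{x},[\vector{v}])\in\C^d\times\mathbb{P}^{r-1}:\partial_iF_{\vector{v}}(\vector{x})=0\ \text{for}\ i=1,\dots,d-1\}.
\]
Intersecting $\mathcal{I}$ with the further hypersurface $\{\partial_dF_{\vector{v}}(\vector{x})=0\}$ gives $\mathcal{I}''=\{(\vector{x},[\vector{v}]):\vector{v}^{\top}J(\vector{x})=\vector{0}\}$, which maps onto $B(\form)$ with every fibre a projectivized left kernel of $J(\vector{x})$, of dimension at most $r-1$; hence $\dim\mathcal{I}\le\dim\mathcal{I}''+1\le\dim B(\form)+r$. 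On the other hand $\mathcal{I}\cap\{x_d=0\}$ maps onto $B(\widetilde{\form})$ with non-empty fibres, so $\dim B(\widetilde{\form})\le\dim(\mathcal{I}\cap\{x_d=0\})\le\dim\mathcal{I}$. Combining, $\dim B(\widetilde{\form})\le\dim B(\form)+r$, which is the $m=1$ case, and iterating over the $m$ coordinate hyperplanes gives the lemma. Apart from the width-mismatch point already flagged, the remaining ingredients --- the reduction to coordinate hyperplanes and the routine fibre-dimension and hypersurface-section inequalities --- are straightforward.
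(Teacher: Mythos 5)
The paper itself gives no proof of Lemma~\ref{lemma1}: it is imported from \cite{dioprimes} (Corollary 4, Section 4) as corrected in \cite{shuntaro} (Corollary 3.3), so there is no in-text argument to compare with line by line. Your reconstruction is in the same linear-algebraic spirit as those sources: reduce by $GL_d(\C)$-invariance and iteration to a single coordinate hyperplane, linearize the rank-drop condition by the auxiliary vector $\vector{v}$, and trade the width mismatch for a loss of $r+1$ dimensions. The skeleton $\dim B(\widetilde{\form})\le\dim(\mathcal{I}\cap\{x_d=0\})\le\dim\mathcal{I}\le\dim\mathcal{I}''+1\le\dim B(\form)+r$ does give the hyperplane case, and the reduction and iteration steps, as well as the fibre-dimension bound $\dim\mathcal{I}''\le\dim B(\form)+(r-1)$, are fine.

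The one step you dismiss as a ``routine hypersurface-section inequality'' is exactly the step that needs an argument: $\dim\mathcal{I}\le\dim\mathcal{I}''+1$ is \emph{not} a general fact for quasi-affine varieties, since intersecting with a hypersurface can be empty or can miss the top-dimensional components altogether (compare $\{xy=1\}\cap\{x=0\}$ in $\C^2$); Krull's principal ideal theorem only gives $\dim(Z\cap\{\partial_d F_{\vector{v}}=0\})\ge\dim Z-1$ once you know the intersection with the component $Z$ is non-empty. Here non-emptiness is true, but only because of homogeneity: the defining equations of $\mathcal{I}$ have degree $k-1$ in $\vector{x}$, so $\mathcal{I}$ is stable under $(\vector{x},[\vector{v}])\mapsto(t\vector{x},[\vector{v}])$; since $\C^{*}$ is connected it preserves each irreducible component $Z$, which is closed and therefore contains a limit point $(\vector{0},[\vector{v}_0])$, and for $k\ge 2$ the form $\partial_d F_{\vector{v}_0}$ vanishes at $\vector{x}=\vector{0}$, so $Z$ meets the hypersurface. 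You should add this cone argument (the same remark is needed for your parenthetical $r=1$ claim that $\dim Z\le\dim B(F)+1$). Note that it genuinely uses $k\ge 2$: for $k=1$ the partials are constants, the cone argument fails, and the statement itself degenerates under the rank-deficiency-locus definition of $\mathcal{B}$; this is harmless for the paper, which only applies the lemma to the systems $\form^{(i)}$ with $i\ge 2$, but it should be flagged rather than claimed for all $k\ge1$.
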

\noindent This lemma originated in \cite{dioprimes} (Corollary 4, Section 4). 
A minor oversight of the first author is corrected in \cite{shuntaro} (Corollary 3.3, Section 3); this corrects the $r$ appearing in \cite{dioprimes} to the  $r+1$ appearing here.

We now describe the second result that we will make use of. 
For a family of forms $\form$, consider the polynomials $\form^{s\oplus}:\Z^{ds}\to\Z^R$, given by 
\[
\form^{s\oplus}(\vector{n}^{(1)},\dots, \vector{n}^{(s)})=\form(\vector{n}^{(1)})+\cdots+\form(\vector{n}^{(s)})
\]
for integral $s\geq1$. 
Specifically, the following lemma gives a comparison of 
$\mathcal{B}_i(\form)$ with that of $\mathcal{B}_i(\form^{s\oplus})$. 

\begin{lemma}\label{lemma2}
Let $\form$ be family of $r$ homogeneous polynomials of common degree $k\geq2$. If the family is linearly independent over $\C$, then $\mathcal{B} (\form^{s\oplus}) \geq s \cdot \mathcal{B} (\form) \geq s$.
\end{lemma}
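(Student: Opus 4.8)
The plan is to reduce the statement about $\mathcal{B}(\form^{s\oplus})$ to the single-copy bound $\mathcal{B}(\form)$ by a slicing argument, using Lemma~\ref{lemma1} to control what happens to the rank invariant when we restrict to a coordinate subspace. Recall that $\form^{s\oplus}(\vector{n}^{(1)},\dots,\vector{n}^{(s)}) = \form(\vector{n}^{(1)}) + \cdots + \form(\vector{n}^{(s)})$ is again a system of $r$ homogeneous polynomials of degree $k$, now in $ds$ variables. The singular locus $\mathcal{B}(\form^{s\oplus})$ is by definition $\geq B$ iff $\form^{s\oplus}$ cannot be written with too few variables after linear changes, equivalently (via Birch's formulation) iff the variety $\{\vector{z} \in \C^{ds} : \mathrm{rank}(\mathrm{Jac}_{\form^{s\oplus}}(\vector{z})) < r\}$ has codimension $\geq B$ in $\C^{ds}$. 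Since the Jacobian of $\form^{s\oplus}$ at a point $(\vector{n}^{(1)},\dots,\vector{n}^{(s)})$ is the $r \times (ds)$ block matrix $[\,\mathrm{Jac}_{\form}(\vector{n}^{(1)}) \mid \cdots \mid \mathrm{Jac}_{\form}(\vector{n}^{(s)})\,]$, its rank drops below $r$ only when \emph{every} block simultaneously fails to span — more precisely, when the union of the row spaces has dimension $< r$.

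First I would set up the key geometric observation: a point $(\vector{n}^{(1)},\dots,\vector{n}^{(s)})$ lies in the singular locus of $\form^{s\oplus}$ only if each $\vector{n}^{(j)}$ lies in a proper linear subvariety cut out by the vanishing of the $r\times r$ minors obtained from that block (after choosing which $r-1$ rows to keep). Concretely, $\mathrm{rank}[\mathrm{Jac}_\form(\vector{n}^{(1)}) \mid \cdots] < r$ forces, for some codimension-$1$ hyperplane-type condition, a containment that lets us project. The cleanest route is: fix the first block. On the open set where $\mathrm{rank}(\mathrm{Jac}_\form(\vector{n}^{(1)})) = r$, the full Jacobian already has rank $r$, so the singular locus of $\form^{s\oplus}$ is contained in $\{\mathrm{rank}(\mathrm{Jac}_\form(\vector{n}^{(1)})) < r\} \times \C^{d(s-1)}$, which has codimension $\geq \mathcal{B}(\form)$ in $\C^{ds}$. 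But that only gives $\mathcal{B}(\form^{s\oplus}) \geq \mathcal{B}(\form)$, not the factor $s$. To get the factor $s$ I would instead argue by induction on $s$, slicing one block at a time and using Lemma~\ref{lemma1} to see that restricting $\form^{s\oplus}$ to the subspace $\{\vector{n}^{(s)} = \vector{c}\}$ (a fixed affine translate) gives a system whose $\mathcal{B}$-invariant is $\geq \mathcal{B}(\form^{(s-1)\oplus}) - 0$ up to the loss from Lemma~\ref{lemma1} applied to going the other way — the point being that $\form^{s\oplus}|_{\vector{n}^{(s)} = \vector{0}} = \form^{(s-1)\oplus}$ together with the constant $\form(\vector{0}) = \vector{0}$. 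The inductive step should compare the dimension of the singular locus of $\form^{s\oplus}$ with $d$ plus the dimension of the singular locus of $\form^{(s-1)\oplus}$, using the block structure to show the singular locus fibers over that of the $(s-1)$-fold sum with fibers of dimension at most $d - \mathcal{B}(\form)$ (this is where the single-copy bound on $\form$ itself enters, since the last block must individually be singular or redundant). Summing the codimension drops across $s$ slices yields $\mathcal{B}(\form^{s\oplus}) \geq s\,\mathcal{B}(\form)$, and the trivial bound $\mathcal{B}(\form) \geq 1$ (valid because $\form$ is linearly independent over $\C$ and each form is nonconstant homogeneous of degree $k \geq 2$, so the Jacobian has full rank somewhere, in fact generically) gives $\geq s$.

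The step I expect to be the main obstacle is making the fibering argument in the inductive step fully rigorous: one must argue that the singular variety of $\form^{s\oplus}$, viewed through the projection forgetting the last block $\vector{n}^{(s)}$, has image inside the singular variety of $\form^{(s-1)\oplus}$ \emph{and} fibers of codimension at least $\mathcal{B}(\form)$ over that image. The subtlety is that rank can drop in the full block matrix without any individual block dropping to rank $< r$ — the row spaces can overlap without any one being deficient. Handling this requires stratifying by the dimension of the span of the first $s-1$ blocks' row spaces: if that span already has dimension $r$ we are done by the earlier open-set argument; if it has dimension $\leq r-1$ then $(\vector{n}^{(1)},\dots,\vector{n}^{(s-1)})$ lies in a variety that one checks has codimension $\geq (s-1)\mathcal{B}(\form)$ by induction, and separately the last block must have row space inside that $(r-1)$-dimensional span, which (since the forms are $\C$-linearly independent) is again a proper linear condition on $\vector{n}^{(s)}$ cutting codimension $\geq \mathcal{B}(\form)$ — here Lemma~\ref{lemma1} with $m$ equal to the relevant number of linear constraints controls the loss. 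Assembling these strata and taking the minimum codimension over all of them is the technical heart of the argument.
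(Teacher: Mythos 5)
There is a genuine gap here, and it stems from a false premise that you yourself flag as the ``technical heart'' of your argument. You assert that ``rank can drop in the full block matrix without any individual block dropping to rank $<r$''; this is impossible. The full Jacobian $\left[\mathrm{Jac}_\form(\vector{n}^{(1)})\,|\,\cdots\,|\,\mathrm{Jac}_\form(\vector{n}^{(s)})\right]$ is an $r\times ds$ matrix whose columns include all columns of each block, so its rank is at least $\max_j \mathrm{rank}\,\mathrm{Jac}_\form(\vector{n}^{(j)})$; hence rank $<r$ for the full matrix forces rank $<r$ for \emph{every} block simultaneously (your ``row spaces'' should in any case be the column spaces, subspaces of $\C^r$). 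This one observation, which you state essentially correctly at the outset and then abandon, already finishes the proof: it gives the containment $V^\dagger_{\form^{s\oplus}}\subset V^\dagger_\form\times\cdots\times V^\dagger_\form$ ($s$ copies), so $\dim_\C V^\dagger_{\form^{s\oplus}}\le s\,\dim_\C V^\dagger_\form$ and therefore $\mathcal{B}(\form^{s\oplus})=sd-\dim_\C V^\dagger_{\form^{s\oplus}}\ge s\,(d-\dim_\C V^\dagger_\form)=s\,\mathcal{B}(\form)$, with $\mathcal{B}(\form)\ge 1$ from linear independence. Your claim that the direct route ``only gives $\mathcal{B}(\form^{s\oplus})\ge\mathcal{B}(\form)$'' arises because you impose the singularity condition on one block at a time instead of on all $s$ blocks at once; codimensions of a product add, and that is exactly where the factor $s$ comes from. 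This short argument is the paper's proof.

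Because of this, the inductive slicing scheme you propose is both unnecessary and, as written, not a proof. Lemma~\ref{lemma1} bounds $\mathcal{B}$ of a \emph{restriction} from below by $\mathcal{B}$ of the original minus a loss, which is the wrong direction for building up $\mathcal{B}(\form^{s\oplus})$ from its slices $\{\vector{n}^{(s)}=\vector{c}\}$; and the stratification you sketch (by the dimension of the span of the first $s-1$ blocks' column spaces) is never carried out: the strata where that span has dimension $\le r-1$ would require a dimension count over the varying subspace, which you do not supply, and the fibering claim that each fiber has codimension $\ge\mathcal{B}(\form)$ is only asserted. None of this machinery is needed once the false ``subtlety'' is removed. (In the paper, Lemma~\ref{lemma1} enters only later, in the proof of Theorem~\ref{theorem:images}, to remove the effect of the linear forms; it plays no role in Lemma~\ref{lemma2}.)
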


\begin{proof}[Proof of Lemma~\ref{lemma2}]
The Jacobian for $\form^{s\oplus}$ has the block matrix structure
\[
\text{Jac}_{\form^{s\oplus}}(\vector{n}^{(1)},\dots, \vector{n}^{(s)}) 
= 
\left[\text{Jac}_\form(\vector{n}^{(1)}) \,|\,\text{Jac}_\form(\vector{n}^{(2)}) \,|\, \cdots \,|\, \text{Jac}_\form(\vector{n}^{(s)})\right] ,
\]
giving 
\[
V^\dagger_{\form^{s\oplus}}\subset \underbrace{V^\dagger_\form\times ...\times V^\dagger_{\form}}_{s\,\text{copies}}.
\]
In turn we see that 
$\text{dim}_{\C^{sd}} (V^\dagger_{\mathbf{Q}^{s\oplus}})
\leq s\,\text{dim}_{\C^d} (V^\dagger_{\mathbf{Q}})$, 
so  that 
\[
\mathcal{B}(\form) 
\geq 
sd-s\,\text{dim}_{\C^d}(V^\dagger_{\mathbf{Q}})=s\mathcal{B}(\form). 
\]
The condition on linear independence implies that $\mathcal{B}(\form) \geq 1$.  
\end{proof}

We are now prepared to prove Theorem~\ref{theorem:images}.
\begin{proof}[Proof of Theorem~\ref{theorem:images}]
Fix the degree $k \geq 2$ as well as the systems of polynomials $\form^{(2)},...,\form^{(k)}$. 
Define $\form(\vector{n})=(\vector{n},\form^{(2)}(\vector{n}), ...\form^{(k)}(\vector{n}))$. 
Suppose that $a \in \ell^2(\Z^d)$ and fix $l \in \N$. 
Writing out $\|E_Na\|_{L^{2l}(\T^{d+R})}$, foliating and applying the Cauchy--Schwarz inequality, we obtain 
\begin{align*}
\| E_Na \|_{L^{2l}(\T^{d+R})} 
& = 
\int_{\T^{d+R}} \left| \sum_{\vector{n}\in\Z^d} \psi(\vector{n}/N)a(\vector{n}) \eof{\form(\mathbf{n})\cdot\alpha} \right|^{2l} \,d\alpha
\\ & = 
\sum_{\vector{t}\in\Z^{d+R}} \left| \sum_{\vector{n}^{(1)},\dots,\vector{n}^{(l)}\in\Z^d} \left( \prod_{i=1}^l \psi(\vector{n}^{(i)}/N) a(\vector{n}^{(i)}) \right) \mathbf{1}_{\form^{s\oplus}(\vector{n}^{(1)},\dots,\vector{n}^{(l)})=\vector{t}} \right|^2
\\ & \lesssim 
\sum_{\vector{t}\in\Z^{d+R}}\left(  \sum_{\vector{n}^{(1)},\dots,\vector{n}^{(l)}\in\Z^d} \left| \prod_{i=1}^l a(\vector{n}^{(i)}) \right|^2 \mathbf{1}_{\form^{l\oplus}(\vector{n}^{(1)},...,\vector{n}^{(l)})=\vector{t}} \right) 
\\& \quad \quad \quad 
\times \left( \sum_{\vector{n}^{(1)},\dots,\vector{n}^{(k)} \in \Z^d} \left( \prod_{i=1}^l \psi(\vector{n}^{(i)}/N) \right) \mathbf{1}_{\form^{l\oplus}(\vector{n}^{(1)},...,\vector{n}^{(l)})=\vector{t}} \right).
\end{align*}
Therefore, 
\begin{align*}
\| E_Na \|_{L^{2l}(\T^{d+R})} 
& \lesssim 
\left( \sup_{\vector{t}\in\Z^{d+R}} \sum_{\vector{n}^{(1)},\dots,\vector{n}^{(l)} \in \Z^d} \psi(\vector{n}^{1}/N) \cdots \psi(\vector{n}^{l}/N) \mathbf{1}_{\form^{l\oplus}(\vector{n}^{(1)},...,\vector{n}^{(l)})=\vector{t}}\right)
\\& \quad \quad \quad \times 
\left( \sum_{\vector{t}\in\Z^{d+R}} \sum_{\vector{n}^{(1)},\dots,\vector{n}^{(k)}\in\Z^d}|a(\vector{n}^{(1)}) \cdots a(\vector{n}^{(l)})|^2 \mathbf{1}_{\form^{l\oplus}(\vector{n}^{(1)},\dots,\vector{n}^{(l)})=\vector{t}} \right)
\\ & \leq 
\left( \sup_{\vector{t}\in \Z^{d+R}} \sum_{\vector{n}^{(1)},\dots,\vector{n}^{(l)}\in\Z^d} \psi(\vector{n}^{1}/N) \cdots \psi(\vector{n}^{l}/N) \mathbf{1}_{\form^{l\oplus}(\vector{n}^{(1)},\dots,\vector{n}^{(l)})=\vector{t}}\right) \|a\|_{\ell^2(\Z^d)}^{2l}.
\end{align*}
Now we need a bound of the form 
\[
\sum_{\vector{n}^{(1)},\dots,\vector{n}^{(l)} \in \Z^d} \psi(\vector{n}^{1}/N) \cdots \psi(\vector{n}^{l}/N) \mathbf{1}_{\form^{l\oplus}(\vector{n}^{(1)},\dots,\vector{n}^{(l)})=\vector{t}}
\lesssim 
N^{ld-D_\form},
\] 
which is uniform over $\mathbf{t} \in \Z^{d+R}$, in order to obtain the desired estimate \eqref{estimate:conjecture2}. 

This is, up to the appearance of the linear forms, the content of Schmidt's Theorem. 
We can remove the effect of the linear pieces with Lemma \ref{lemma1}. 
With $l$ fixed, set $H(\vector{t})$ to be the affine linear space given by the $d$ equations 
\[
\vector{n}^{(1)}+\cdots+\vector{n}^{(l)} = \vector{t}.
\]
The co-dimension of $H(\vector{t})$ is then $d$ for any $\vector{t}$. Now, by applying Lemma \ref{lemma1}, it follows that 
\[
\mathcal{B}_i(\form^{l\oplus}|_{H(\vector{t})})
\geq \mathcal{B}_i(\form^{l\oplus})-(r_i+1)d
\geq l\mathcal{B}_i(\form)-(r_i+1)d.
\]
Provided that $l\mathcal{B}_i(\form)\geq\chi(i, r_i, k, R)+(r_i+1)d$ for each $2\leq i\leq k$, and recalling that $r=\max_{2\leq i \leq k} r_i$, the result now follows from Schmidt's Theorem and setting $l=r+s$ with $s\geq 2$.
\end{proof}

\section{The arithmetic Tomas--Stein method}
\label{section:preliminaries}

Let $\omega_\lambda$ be the counting measure on $V_{\form=\lambda}$ for a single integral, positive definite, homogenous form $\form$ satisfying \eqref{Birch_criterion} and some $\lambda \in \Z$. 
Let $F = \FT{\Z^d}(\omega_\lambda)$ be the exponential sum corresponding to $\omega_\lambda$. 
A common approach to problems involving $\omega_\lambda$ is to use the circle method so as to decompose the exponential sum $F$ into a main piece $F_\frakM$ and an error term $F_\frakm$ corresponding respectively to major and minor arcs. 
To prove discrete restriction estimates, Bourgain, in \cite{Bourgain:discrete_restriction:NLS,Bourgain:discrete_restriction:KdV} combined this approach with the Tomas $L^2$ restriction argument in order to reduce matters to the following two estimates: 
\begin{itemize}
\item
prove bounds for the operator given by convolution with the major arc operator $F_\frakM$, 
\item
and prove a uniform power saving bound on the minor arc piece $F_\frakm$. 
\end{itemize}
See \cite{HuLi} for a close variant. 
Bourgain's approach has been abstracted in \cite{Keil} and \cite{HH:paraboloids}. 
We combine Lemmas~2 and 3 from \cite{HH:paraboloids} to form the following lemma. 
\begin{lemma}
\label{lemma:prelims:TomasSteinDcp}
For $\lambda \in \N$, let $F = \FT{\Z^d}(\omega_\lambda)$ be the $\Z^d$-Fourier transform of the arithmetic surface measure $\omega_\lambda$ defined on $V_{\form=\lambda}(\Z)$. 
Suppose that there exists a decomposition $F = F_{\frakM} + F_{\frakm}$ such that for each $f \in L^\infty(\T^d)$, we have the estimates 
\begin{align} 
\label{estimate:subcritical}
& \| F \ast f \|_{L^{p_0}(\T^r)} \lesssim \lambda^{\epsilon} \|f\|_{L^{p_0'}(\T^d)}
\quad \text{for some} \quad 
p_0 \leq p_c, 
\tag{TS1}
\\ 
\label{estimate:majorarcs}
& \| F_{\frakM} \ast f \|_{p_1} \lesssim \lambda^{{\frac{d}{k}-1}-\frac{2d}{kp_1}} \|f\|_{p_1'}
\quad \text{for some} \quad p_1 > p_c, 
\; \text{and} 
\tag{TS2}
\\ 
\label{estimate:minorarcs}
& \| F_{\frakm} \|_{\infty} \lesssim N^{\frac{d}{k}-1-\frac{\zeta}{k}} 
\quad \text{for some} \quad \zeta \in (0,d-k). 
\tag{TS3}
\end{align}
Then
$\| F \ast f \|_{L^p(\T^r)} \lesssim \lambda^{\frac{d}{k}-1 - \frac{2d}{kp}} \| f \|_{L^{p'}}$
holds for $p > \max[p_1, \frac{2d - (d-k)p_0}{\zeta} + p_0 ]$. 
\end{lemma}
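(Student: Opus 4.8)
Here is how I would go about it.

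The plan is to estimate the convolution operator $f\mapsto F\ast f$ on $\T^d$ directly, by splitting $F\ast f=F_{\frakM}\ast f+F_{\frakm}\ast f$ and bounding the major-arc and minor-arc pieces separately; everything then reduces to Riesz--Thorin interpolation between $L^q\to L^r$ bounds for these two operators. Two auxiliary facts I would use are $\|F\|_{L^\infty(\T^d)}=N_\form(\lambda)$ (as $F$ is a sum of $N_\form(\lambda)$ unimodular exponentials, maximised at the origin) and Birch's circle-method upper bound $N_\form(\lambda)\lesssim_\form\lambda^{\frac{d}{k}-1}$, which holds for every $\lambda\in\N$ (only the matching lower bound requires $\lambda\in\Gamma_\form$).

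For the major-arc piece I would combine \eqref{estimate:majorarcs} with the trivial bound $\|F_{\frakM}\ast f\|_{L^\infty}\le\|F_{\frakM}\|_{L^\infty}\|f\|_{L^1}$, where $\|F_{\frakM}\|_{L^\infty}\le\|F\|_{L^\infty}+\|F_{\frakm}\|_{L^\infty}\lesssim\lambda^{\frac{d}{k}-1}$ by \eqref{estimate:minorarcs}. Interpolating along the segment joining the exponent pairs $(p_1',p_1)$ and $(1,\infty)$, with parameter $\theta=1-p_1/p$ weighting the $L^1\to L^\infty$ endpoint (the domain exponents match because $1-\tfrac{1}{p_1}=\tfrac{1}{p_1'}$), gives $\|F_{\frakM}\ast f\|_{L^p}\lesssim\lambda^{(1-\theta)(\frac{d}{k}-1-\frac{2d}{kp_1})+\theta(\frac{d}{k}-1)}\|f\|_{L^{p'}}$; the exponent collapses to exactly $\frac{d}{k}-1-\frac{2d}{kp}$ because $(1-\theta)\frac{2d}{kp_1}=\frac{p_1}{p}\cdot\frac{2d}{kp_1}=\frac{2d}{kp}$, and this holds for all $p\ge p_1$.

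For the minor-arc piece I would interpolate between the endpoint $\|F_{\frakm}\ast f\|_{L^\infty}\le\|F_{\frakm}\|_{L^\infty}\|f\|_{L^1}\lesssim\lambda^{\frac{d}{k}-1-\frac{\zeta}{k}}\|f\|_{L^1}$ supplied by \eqref{estimate:minorarcs}, and the subcritical endpoint $\|F_{\frakm}\ast f\|_{L^{p_0}}\lesssim\lambda^\epsilon\|f\|_{L^{p_0'}}$; the latter follows from \eqref{estimate:subcritical} and $\|F_{\frakm}\ast f\|_{L^{p_0}}\le\|F\ast f\|_{L^{p_0}}+\|F_{\frakM}\ast f\|_{L^{p_0}}$ once the major-arc operator is seen to be of size $O(\lambda^\epsilon)$ at the subcritical exponent $p_0\le p_c$ (a mild addendum to \eqref{estimate:subcritical}--\eqref{estimate:majorarcs}: it is automatic when $p_0=2$, by Plancherel together with boundedness of the major-arc multiplier, and in the applications is a by-product of the construction of $F_{\frakM}$). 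Interpolating along the segment joining $(p_0',p_0)$ and $(1,\infty)$, with parameter $\theta=1-p_0/p$, then yields $\|F_{\frakm}\ast f\|_{L^p}\lesssim\lambda^{\frac{p_0}{p}\epsilon+(1-\frac{p_0}{p})(\frac{d}{k}-1-\frac{\zeta}{k})}\|f\|_{L^{p'}}$. Writing $B=\frac{d}{k}-1$ and invoking the defining identities $Bp_c=\frac{2d}{k}$, $kB=d-k$, $kBp_c=2d$ of the critical exponent $p_c=\frac{2d}{d-k}$, the requirement that this exponent not exceed $\frac{d}{k}-1-\frac{2d}{kp}$ rearranges to $p\ge p_0+\frac{2d-(d-k)p_0}{\zeta}+\frac{k\epsilon p_0}{\zeta}$; as $\epsilon>0$ is arbitrary, this holds for all $p>p_0+\frac{2d-(d-k)p_0}{\zeta}$.

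Combining the two pieces gives the asserted bound for $p>\max[p_1,\ \frac{2d-(d-k)p_0}{\zeta}+p_0]$. The only real content is the exponent arithmetic, and that is what I would be most careful with: one must check that (i) the major-arc interpolation lands exactly on the conjectured power $\lambda^{\frac{d}{k}-1-\frac{2d}{kp}}$ for every $p\ge p_1$ rather than on a lossy power, and (ii) the minor-arc interpolation against the saving $\zeta$ in \eqref{estimate:minorarcs} returns precisely the threshold $\frac{2d-(d-k)p_0}{\zeta}+p_0$ --- both hinging on the definition of $p_c$ and on the exact exponents in \eqref{estimate:majorarcs}--\eqref{estimate:minorarcs}. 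The one genuinely non-arithmetic ingredient, needed to start the minor-arc interpolation, is the subcritical bound for $F_{\frakm}\ast\cdot$, which comes down to $F_{\frakM}$ behaving at the exponent $p_0$ like the circle-method model it is.
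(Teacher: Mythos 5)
Your major-arc step is fine: interpolating \eqref{estimate:majorarcs} against the $L^1\to L^\infty$ bound coming from $\|F_{\frakM}\|_\infty\lesssim\lambda^{\frac{d}{k}-1}$ does land exactly on $\lambda^{\frac{d}{k}-1-\frac{2d}{kp}}$ for $p\ge p_1$. The genuine gap is in the minor-arc step: hypothesis \eqref{estimate:subcritical} is a bound for the \emph{full} operator $F$, not for $F_{\frakm}$, and nothing in \eqref{estimate:subcritical}--\eqref{estimate:minorarcs} supplies the endpoint $\|F_{\frakm}\ast f\|_{p_0}\lesssim\lambda^{\epsilon}\|f\|_{p_0'}$ that your second interpolation requires. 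The ``mild addendum'' $\|F_{\frakM}\ast f\|_{p_0}\lesssim\lambda^{\epsilon}\|f\|_{p_0'}$ is an extra hypothesis about the construction of $F_{\frakM}$, not a consequence of the stated ones: it is not ``automatic by Plancherel,'' since the multiplier is $\FT{\T^d}(F_{\frakM})=\omega_\lambda-\FT{\T^d}(F_{\frakm})$ and the only control available on $F_{\frakm}$ is the sup bound \eqref{estimate:minorarcs}, which gives $\|\FT{\T^d}(F_{\frakm})\|_{\ell^\infty}\lesssim\lambda^{\frac{d}{k}-1-\frac{\zeta}{k}}$, far from $O(\lambda^\epsilon)$; and in the paper's application (Magyar's decomposition) such a bound is neither assumed nor proved. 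If you instead stay within the hypotheses and control $\|F_{\frakM}\ast f\|_{p_0}$ by nesting of $L^p(\T^d)$ norms and \eqref{estimate:majorarcs}, the endpoint constant becomes $\lambda^{\frac{d}{k}-1-\frac{2d}{kp_1}}$ and the interpolation yields the strictly worse threshold $p_0+\frac{2d}{\zeta}\bigl(1-\frac{p_0}{p_1}\bigr)>p_0+\frac{2d-(d-k)p_0}{\zeta}$ (because $p_1>p_c$). So as written your argument either proves a weaker lemma or a correct lemma under an unstated additional hypothesis.

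The ingredient you are missing, and the way the paper proves the lemma from \eqref{estimate:subcritical}--\eqref{estimate:minorarcs} alone, is a level-set (restricted-type) argument in the extension formulation: normalize $\|a\|_{\ell^2}=1$, set $S_\alpha=\{|Ea|\ge\alpha\}$, $f=\indicator{S_\alpha}Ea/|Ea|$, and use the $TT^*$ identity $\alpha^2|S_\alpha|^2\le\langle f\ast F,f\rangle$. The splitting $F=F_{\frakM}+F_{\frakm}$ is used only inside this inequality, where the minor-arc term contributes $\|F_{\frakm}\|_\infty\|f\|_1^2\lesssim N^{d-k-\zeta}|S_\alpha|^2$ and is absorbed into the left-hand side once $\alpha\gtrsim N^{\frac{d-k-\zeta}{2}}$, giving $|S_\alpha|\lesssim\alpha^{-p_1}N^{\frac{(d-k)p_1}{2}-d}$; integrating in $\alpha$ handles the region $|Ea|\gtrsim N^{\frac{d-k-\zeta}{2}}$ for $p>p_1$, while the complementary region is handled by applying \eqref{estimate:subcritical} to the full $F$ together with the pointwise cutoff, and it is exactly this step that produces the threshold $\frac{2d-(d-k)p_0}{\zeta}+p_0$. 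In this way no subcritical information about $F_{\frakM}$ or $F_{\frakm}$ separately is ever needed, which is precisely the point of Bourgain's arithmetic Tomas--Stein argument.
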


In our work, we only use Plancherel's theorem to exploit the subcritical estimate at $p_0 = 2$; this gives the exponent $p > \max[\, p_1, \frac{2d - (d-k)2}{\zeta} + 2 ] = \max[\, p_1, \frac{2k}{\zeta} + 2 ]$. 
We give the proof of Lemma~\ref{lemma:prelims:TomasSteinDcp} for completeness. 
\begin{proof}[Proof of Lemma~\ref{lemma:prelims:TomasSteinDcp}]
Set $N = \lceil \lambda^{1/k} \rceil$. 
Fix $p>\max[p_1, \frac{2d - (d-k)p_0}{\zeta} + p_0 ]$ and let $a$ be an element of $\ell^{2}$. 
For notational convenience we let $E$ denote the extension operator defined on sequences $a : \Z^d \to \C$ by $Ea := \FT{\Z^d}(\omega_\lambda \cdot \FT{\T^d}a) = a*\FT{\Z^d}(\omega_\lambda)$. 
We may assume that $a$ is not identically zero and by homogeneity normalize $a$ so that $\| a \|_{2} = 1$. 
We introduce a parameter $\alpha > 0$ in order to define the level sets and functions
\begin{align*}
S_\alpha = \{ \vector{\xi} \in \T^d : |Ea(\vector{\xi})| \geq \alpha \},
\qquad \text{and} \qquad
f = 1_{S_\alpha} \frac{Ea}{|Ea|}.
\end{align*}

By the Cauchy--Schwarz inequality and Birch's theorem in \cite{Birch} we have 
\begin{equation}\label{estimate:trivial}
\|Ea\|_{L^\infty} \lesssim N^{\frac{d-k}{2}}.
\end{equation} 
Therefore, we may restrict $\alpha$ to lie in the interval $[0,CN^{\frac{d-k}{2}}]$ for some positive constant $C$. 
By Parseval's identity, we have
\begin{align*}
\alpha |S_\alpha|
\leq \langle f , Ea \rangle
= \langle \FT{\T^d}f , \omega_\lambda \cdot a \rangle
= \langle \omega_\lambda \cdot \FT{\T^d}f , a \rangle.
\end{align*}
By Cauchy--Schwarz and the assumption $\| a \|_2 = 1$, it follows that
\begin{align*}
\alpha^2 |S_\alpha|^2
\leq \| (\FT{\T^d}f) \omega_\lambda \|_{\ell^2}^2
= \langle (\FT{\T^d}f) \cdot \omega_\lambda , \FT{\T^d}f \rangle.
\end{align*}
Another application of Parseval's identity implies that 
\begin{align}
\label{eq:prelims:TomasStein}
\alpha^2 |S_\alpha|^2 
\leq \langle f \ast F, f \rangle.
\end{align}

By \eqref{eq:prelims:TomasStein}, Holder's inequality and hypotheses \eqref{estimate:majorarcs} and \eqref{estimate:minorarcs} of the lemma, we have 
\begin{align*}
\alpha^2 |S_\alpha|^2 
& \leq 
\| f \ast F_{\frakM} \|_{p_1} \| f \|_{p_1'} 
	+ \| f \ast F_{\frakm} \|_\infty \| f \|_1 
\\ 
& \lesssim 
N^{d-k - \frac{2d}{p_1}}  \| f \|_{p_1'}^2 
	+ \| F_{\frakm} \|_\infty \| f \|_1^2 
\\ 
&\lesssim
N^{d-k - \frac{2d}{p_1}}  | {S_\alpha} |^{\frac{2}{p_1'}} 
	+ N^{d-k-\zeta} | {S_\alpha} |^2. 
\end{align*}
Therefore, when \( \alpha \gtrsim N^{\frac{d-k}{2}-\frac{\zeta}{2}} \), we have 
\[
\alpha^2 |S_\alpha|^2 
\lesssim 
N^{d-k - \frac{2d}{p_1}} | {S_\alpha} |^{2 - \frac{2}{p_1}} 
. 
\]

\noindent Rearranging implies that 
$|S_\alpha| \lesssim \alpha^{-p_1} N^{\frac{(d-k)p_1}{2} - d}$.
Since $p > p_1$, we have 
\begin{align*}
\int_{|Ea| \gtrsim N^{\frac{d-k}{2} - \frac{\zeta}{2}}} |Ea|^p \dm 	
& = 
p \int_{ CN^{\frac{d-k}{2} - \frac{\zeta}{2}} }^{ CN^{\frac{d-k}{2}} } \alpha^{p-1} |S_\alpha| \dalpha
\\ 
&\lesssim 
N^{\frac{(d-k)p_1}{2} - d} \int_1^{CN^{\frac{d-k}{2}}} \alpha^{p-p_1-1} \dalpha 
\\
&\lesssim 
N^{\frac{(d-k)p}{2} - d}.
\end{align*}
Altogether we have 
\begin{equation}\label{estimate:low_frequencies}
\int_{|Ea| \gtrsim N^{d/2 - \zeta/2}} |Ea|^p \dd{m}	
\lesssim 
N^{\frac{(d-k)p}{2} - d}.
\end{equation}

We are left to consider the the regime where $|Ea| \lesssim N^{\frac{d-k}{2} - \frac{\zeta}{2}}$. 
We now make use of estimate \eqref{estimate:subcritical} at the exponent $p_0$ to handle the regime where $|Ea| \lesssim N^{\frac{d-k}{2} - \frac{\zeta}{2}}$. 
This is possible by the trivial bound \eqref{estimate:trivial} as follows: 
\begin{align*}
\int_{ |Ea| \lesssim N^{\frac{d-k}{2} - \frac{\zeta}{2}} } |Ea|^p \dm
&\lesssim
(N^{\frac{d-k}{2} - \frac{\zeta}{2}})^{p - p_0} \int_{\T^r} |Ea|^{p_0} \dm
\lesssim_\epsilon
N^{\frac{(d-k-\zeta)(p-p_0)}{2}+ \epsilon}.
\end{align*}
Combining this estimate with \eqref{estimate:low_frequencies}, we have that 
\begin{align*}
\int |Ea|^p \dm
&= 
\int_{ |Ea| \lesssim N^{\frac{d-k}{2} - \frac{\zeta}{2}} } |Ea|^p \dm 
+ \int_{ |Ea| \gtrsim N^{\frac{d-k}{2} - \frac{\zeta}{2}} } |Ea|^p \dm
\\ &\lesssim_\epsilon
N^{\frac{(d-k)p}{2} - d} + N^{\frac{(d-k-\zeta)(p-p_0)}{2}+ \epsilon}.
\end{align*}
The latter summand is dominated by the former summand when 
$\frac{(d-k-\zeta)(p-p_0)}{2} < \frac{(d-k)p}{2} - d$. 
This is equivalent to 
\[
\frac{(d-k-\zeta)(p-p_0)}{2} 
= \frac{(d-k)p}{2} - \frac{\zeta p}{2} -\frac{(d-k-\zeta)p_0}{2} 
< \frac{(d-k)p}{2} - d
\]
which is equivalent to 
\[
\frac{\zeta p}{2}+\frac{(d-k-\zeta)p_0}{2}  > d.
\]
Rearranging this last expression we find that we need 
\[
p > \frac{2}{\zeta} (d - \frac{(d-k-\zeta)p_0}{2})
= \zeta^{-1} (2d - (d-k-\zeta)p_0)
= \frac{2d - (d-k)p_0}{\zeta} + p_0.
\]
This is precisely the range of $p>\frac{2d - (d-k)p_0}{\zeta} + p_0$.
\end{proof}

\section{Magyar's decomposition of the surface measure}\label{section:BirchMagyar}

Let $\form(\vector{x}) \in \Z[\vector{x}]$ be an integral, positive definite, homogeneous form where $\vector{x} = (x_1,\dots,x_d)$. 
The heavy lifting in our theorem lies in a decomposition of Magyar for the surface measure $\omega_\lambda := {\bf 1}_{\{\vector{x} \in \Z^d : \form(\vector{x})=\lambda\}}$ where $\lambda \in \Z$; this is the counting measure on the integer points $\vector{x}$ in $\Z^d$ such that $\form(\vector{x})=\lambda$. 
To state this theorem we need to introduce a few objects. 

For $q \in \N$, $a \in U_q$ and $\vector{m} \in \Z^d$, define the normalized Birch--Weyl sums
\[
G_\form(a,q;\vector{m}) 
:= 
q^{-d}\sum_{\vector{b} \in (\Z/q)^d} \eof{\frac{a\form(\vector{b})+\vector{b}\cdot\vector{m}}{q}}.
\]
We have the bound 
\begin{equation}\label{eq:Birch:Weyl_sum}
|G_\form(a,q;\vector{m})| \lesssim_\epsilon q^{\epsilon-\kappa_\form}
\quad \text{for all} \quad \epsilon>0
\end{equation}
uniformly in $a \in U_q$ and $\vector{m} \in \Z^d$ with 
\[
\kappa_{\form} := \frac{d-\dim V_\form(\C)}{2^{k-1}(k-1)}.
\]
See \cite{Magyar:ergodic} for a proof of this fact. 
The dimension $d$ is sufficiently large so that $\kappa_{\form}>2$. 

Let $d\sigma_{\form}$ denote the singular measure on $\R^d$ defined as the Gelfand--Leray form whose $\R^d$-Fourier transform is defined distributionally by the oscillatory integral 
\[
\int_{\R} \eof{t(\form(\vector{x})-1)} \; dt.
\]
It is known that 
\begin{equation}\label{equation:surfacemeasure} 
d\sigma_{\form}(\vector{x}) = dS_{\form}(\vector{x})/|\nabla \form(\vector{x})| 
\end{equation} 
where $dS_{\form}$ is the Euclidean surface area measure on the hypersurface $\{ \vector{x} \in \R^d : \form(\vector{x})=1 \}$. 
These measures are compactly supported since $\form$ is positive definite. 
We cite the following bound - see Lemma~6 on page 931 of \cite{Magyar:ergodic} - for the $\R^d$-Fourier transform of the surface measure: 
\begin{equation}\label{estimate:Birch:surface_measure}
|\widetilde{d\sigma_\form}(\vector{\xi})| 
\lesssim_\epsilon 
(1+|\vector{\xi}|)^{1-\kappa_\form+\epsilon} 
\quad \text{for each} \quad \vector{\xi} \in \R^d
\quad \text{and for all} \quad \epsilon>0.
\end{equation}

Let $\Psi$ be a $C^\infty(\R^d)$ bump function supported in the cube $[-1/8,1/8]^d$ and 1 on the cube $[-1/16,1/16]^d$ where these cubes are regarded as subsets of the torus $\T^d$. 
For each $q \in \N$, let $s$ be the integer such that $2^s \leq q < 2^{s+1}$. 
For such $q$ and for $a \in U_q$, define the Fourier multipliers 
\begin{equation*}
\mu^{a/q}_\lambda(\vector{\xi}) 
:= 
\sum_{\vector{m} \in \Z^d} G_{\form}(a,q;\vector{m}) \Psi(2^s[\vector{\xi}-\frac{\vector{m}}{q}]) \widetilde{d\sigma_{\form}}(\lambda^{\frac{1}{k}}[\vector{\xi}-\frac{\vector{m}}{q}])
\end{equation*}
for $\xi \in \T^d$. 
Generalizing work of \cite{MSW}, \cite{Magyar:ergodic} obtained a flexible decomposition of the surface measure; we choose the following form. 
\begin{Magyar}[\cite{Magyar:ergodic,MSW}]\label{Magyar:Birch}
Let $\form(\vector{x}) \in \Z[\vector{x}]$ be an regular, positive definite, homogeneous, integral form. 
For each $\lambda \in \N$ the Fourier transform of the surface measure $\omega_\lambda$ decomposes as 
\begin{equation}
\lambda^{1-\frac{d}{k}} \cdot  \FT{\Z^d}{\omega_\lambda}(\vector{\xi}) 
= 
\left( \sum_{s=0}^{\lceil \log_2 \lambda^{1/k} \rceil} \sum_{q=2^s}^{2^{s+1}-1} \eof{-\frac{a\lambda}{q}} \sum_{a \in U_q} \mu^{a/q}_\lambda(\vector{\xi}) \right) 
+ \varepsilon_\lambda(\vector{\xi})
\end{equation}
where 
\begin{equation}\label{estimate:error_bound}
\| \varepsilon_\lambda \|_{L^\infty(\T^d)} 
\lesssim_{\form,\epsilon} 
\lambda^{\epsilon-\gamma_\form} 
\quad \text{for all} \quad \epsilon>0.
\end{equation}
\end{Magyar}

\begin{remark}
Our form of the error term $\varepsilon_\lambda$ and its estimate \eqref{estimate:error_bound} do not explicitly appear in \cite{Magyar:ergodic}. 
We outline the differences and how to prove this form of Magyar's Decomposition Theorem. 
Recall that Magyar's main term takes the shape as the Fourier multiplier
\begin{equation}\label{Magyar:mainterm}
\sum_{q \in \N} \sum_{a \in U_q} \eof{-\frac{a\lambda}{q}} \sum_{\vector{m} \in \Z^d} G_{\form}(a,q;\vector{m}) \Psi(q\vector{\xi}-\vector{m}) \widetilde{d\sigma_{\form}}(\lambda^{1/k}[\vector{\xi}-\frac{\vector{m}}{q}]).
\end{equation}
The first notable difference is that we have dyadically refined the decomposition so that \eqref{Magyar:mainterm} becomes 
\begin{equation}\label{Magyar:mainterm:dyadic}
\sum_{s=0}^{\infty} \sum_{q=2^s}^{2^{s+1}-1} \sum_{a \in U_q} \eof{-\frac{a\lambda}{q}} \sum_{\vector{m} \in \Z^d} G_{\form}(a,q;\vector{m}) \Psi(2^s[\vector{\xi}-\frac{\vector{m}}{q}]) \widetilde{d\sigma_{\form}}(\lambda^{1/k}[\vector{\xi}-\frac{\vector{m}}{q}]).
\end{equation}
This modifies the analysis of (2.15) and (2.16) of Proposition~4 in \cite{Magyar:ergodic} in inconsequential ways since $2^s \leq q < 2^{s+1}$. 
In particular, this preserves the estimate \eqref{estimate:error_bound}. 
The second notable difference is that we truncated the sum over $q \in \N$. 
Following the analysis of (2.17) of Proposition~4 in \cite{Magyar:ergodic}, we may truncate \eqref{Magyar:mainterm:dyadic} to \begin{equation}\label{Magyar:mainterm:truncated}
\sum_{s=0}^{\lfloor \log_2 \lambda^{1/k} \rfloor} \sum_{q=2^s}^{2^{s+1}-1} \sum_{a \in U_q} \eof{-\frac{a\lambda}{q}} \sum_{\vector{m} \in \Z^d} G_{\form}(a,q;\vector{m}) \Psi(2^s[\vector{\xi}-\frac{\vector{m}}{q}])  \widetilde{d\sigma_{\form}}(\lambda^{1/k}[\vector{\xi}-\frac{\vector{m}}{q}])
\end{equation}
and place the difference into the error term $\varepsilon_\lambda$ while maintaining the estimate \eqref{estimate:error_bound}. 
The expert may immediately verify this by using the Magyar--Stein--Wainger transference principle (see Section~2 of \cite{MSW}) and Birch's Weyl bound \eqref{eq:Birch:Weyl_sum}. 
\end{remark}

\begin{remark}
When $\form(\vector{x}) = x_1^2+\cdots+x_d^2$ our choice of decomposition differs from that of \cite{Bourgain:lattice_point_problems} and the corresponding estimates implied by Theorem~\ref{theorem:main} are weaker than that of \cite{Bourgain:lattice_point_problems}. 
For certain diagonal forms, such as $\form(\vector{x}) := x_1^k+\cdots+x_d^k$ when $k \geq 2$, Magyar's decomposition may be refined; see \cite{MSW,ACHK} for improved bounds which yield improvements to Magyar's Decomposition Theorem for certain families of forms. 
In turn, these improvements yield improvements to Theorem~\ref{theorem:main} for those families of forms. 
\end{remark}



The next theorem establishes \eqref{estimate:majorarcs} of Lemma~\ref{lemma:prelims:TomasSteinDcp}; that is, we treat the major arc terms. 
\begin{theorem}\label{theorem:majorarcs}
Let $\form(\vector{x}) \in \Z[\vector{x}]$ be a positive definite, regular, integral, homogeneous form satisfying \eqref{Birch_criterion}, and $\lambda \in \N$. 
If $p > 2+\frac{4}{\kappa_\form-2}$ we have 
\begin{equation}\label{eq:main_term} 
\| F_{\major} \ast f \|_{L^{p}(\T^d)} 
\lesssim_p 
\lambda^{\frac{d-k}{k}-\frac{2d}{kp}} \|f\|_{L^{p'}(\T^d)}
\end{equation}
for each $\lambda \in \N$. 
\end{theorem}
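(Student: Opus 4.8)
\emph{Sketch of the proof of Theorem~\ref{theorem:majorarcs}.}
The plan is to apply the Tomas method a second time --- now to the major-arc kernel itself --- exploiting the dyadic structure of Magyar's Decomposition Theorem together with Birch's Weyl bound \eqref{eq:Birch:Weyl_sum}. Set $N := \lceil\lambda^{1/k}\rceil$. Magyar's Decomposition Theorem presents $F_{\major}$, up to the harmless truncation of the modulus at $q<2N$ whose tail is absorbed into $\varepsilon_\lambda$, as the Fourier multiplier $\lambda^{\frac dk-1}\sum_{s}\sum_{2^s\le q<2^{s+1}}\sum_{a\in U_q}\eof{-a\lambda/q}\,\mu^{a/q}_\lambda$. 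Group the terms dyadically in the modulus, writing $F_{\major}=\sum_{s=0}^{\lceil\log_2\lambda^{1/k}\rceil}F_s$ with $F_s$ the contribution of $2^s\le q<2^{s+1}$. Since there are only $O(\log\lambda)$ scales, it suffices to prove, for each $s$ and each $p>2+\tfrac{4}{\kappa_\form-2}$, a bound of the shape $\|F_s\ast f\|_{L^p(\T^d)}\lesssim_p 2^{-\delta(p)s}\lambda^{\frac{d-k}{k}-\frac{2d}{kp}}\|f\|_{L^{p'}(\T^d)}$ for some $\delta(p)>0$; summing the geometric series in $s$ (and absorbing the $\log\lambda$ and any $\lambda^{\epsilon}$ loss by taking $\epsilon$ small) then gives \eqref{eq:main_term}. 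Equivalently, one may re-run the level-set argument in the proof of Lemma~\ref{lemma:prelims:TomasSteinDcp} directly on $F_{\major}=\sum_s F_s$.

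For a fixed dyadic scale $s$ I would separate the two ingredients of $F_s$: the arithmetic weights, namely the Gauss sums $G_\form(a,q;\vector m)$ and the phases $\eof{-a\lambda/q}$, and the geometric factor $\Psi(2^s[\vector\xi-\vector m/q])\,\widetilde{d\sigma_\form}(N[\vector\xi-\vector m/q])$, which is a bump of width $\sim 2^{-s}\sim q^{-1}$ about $\vector m/q$ carrying a rescaled copy of the Fourier transform of the smooth, compactly supported surface measure $d\sigma_\form$ on $\{\form=1\}$. I would then establish two endpoint bounds and interpolate. First, an $L^1\to L^\infty$ bound $\|F_s\ast f\|_\infty\le\|F_s\|_{L^\infty(\T^d)}\|f\|_1$: for a fixed $\vector\xi$ there are at most $\lesssim 2^s$ moduli $q\sim 2^s$ with a nearby fraction $\vector m/q$, for each such $q$ essentially one relevant $\vector m$ (the bumps with a common denominator being disjoint) and $\phi(q)\sim 2^s$ values of $a$, so combining the Weyl bound $|G_\form(a,q;\vector m)|\lesssim_\epsilon q^{\epsilon-\kappa_\form}$ with $\|\widetilde{d\sigma_\form}\|_\infty\lesssim 1$ gives, crucially using $\kappa_\form>2$, the decaying estimate $\|F_s\|_{L^\infty(\T^d)}\lesssim_\epsilon\lambda^{\frac dk-1}2^{s(2-\kappa_\form+\epsilon)}$. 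Second, an $L^2\to L^2$ bound $\|F_s\ast f\|_2=\|\FT{\T^d}(F_s)\|_{\ell^\infty(\Z^d)}\|f\|_2$, for which I would compute $\FT{\T^d}(F_s)$ explicitly: taking the transform turns the $\vector m$-sum into a periodization, and resumming the Gauss sums over residue classes collapses them to the single phase $\eof{a\form(\vector n)/q}$, so the $a$- and $q$-sums become Ramanujan sums $\sum_{2^s\le q<2^{s+1}}c_q(\form(\vector n)-\lambda)$, while the geometric factor becomes $\FT{\R^d}\!\big(\Psi(2^s\cdot)\,\widetilde{d\sigma_\form}(N\cdot)\big)(\vector n)$, a copy of $d\sigma_\form$ dilated by $N$ and smoothed at scale $2^s$, whose $\ell^\infty(\Z^d)$ norm I would control using the decay estimate \eqref{estimate:Birch:surface_measure}, the Ramanujan sums being controlled by the divisor bound.

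Interpolating gives $\|F_s\ast f\|_p\lesssim\|F_s\ast f\|^{2/p}_{L^2\to L^2}\,\|F_s\ast f\|^{1-2/p}_{L^1\to L^\infty}\,\|f\|_{p'}$; matching powers of $N=\lambda^{1/k}$ should recover the exponent $\tfrac{d-k}{k}-\tfrac{2d}{kp}$ --- here the sharpness of \eqref{estimate:Birch:surface_measure} is what matters --- while the exponent of $2^s$ becomes negative precisely for $p>2+\tfrac{4}{\kappa_\form-2}$: the decaying factor $2^{s(2-\kappa_\form)}$ furnished by the Weyl bound must dominate the contribution of the $L^2$ endpoint, and balancing the interpolation weights $2/p$ and $1-2/p$ against $\kappa_\form-2$ yields this threshold. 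Summing over the $O(\log\lambda)$ dyadic scales completes the argument. I expect the main obstacle to be the $L^2\to L^2$ estimate with the correct power of $N$: one must quantify accurately both the physical-space spreading of $d\sigma_\form$ after dilation by $\lambda^{1/k}$ and its smoothing at scale $2^s$, and then pair this with the divisor bound for the sum of Ramanujan sums over $q\sim 2^s$; a crude substitute (such as $\|\FT{\T^d}(F_s)\|_{\ell^\infty}\le\|F_s\|_{L^1(\T^d)}$) is too lossy to reach the sharp $\lambda$-exponent, so \eqref{estimate:Birch:surface_measure} must be used efficiently across the whole range $1\le 2^s\le\lambda^{1/k}$, and verifying that the accumulated exponents produce exactly the threshold $p>2+\tfrac{4}{\kappa_\form-2}$ is the delicate bookkeeping.
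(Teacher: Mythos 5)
Your overall architecture is close to the paper's: the paper also splits the major arcs dyadically in the modulus $q\sim 2^s$, proves an $L^1\to L^\infty$ bound and an $L^2\to L^2$ bound for each piece, interpolates, and sums, with the threshold $p>2+\frac{4}{\kappa_\form-2}$ emerging exactly as you predict from balancing the Weyl decay $q^{-\kappa_\form}$ against the $\sim 2^{2s}$ fractions at level $s$ (the paper additionally performs a Littlewood--Paley decomposition in $|\vector{\xi}-\vector{m}/q|$ and feeds the decay \eqref{estimate:Birch:surface_measure} into the $L^\infty$ endpoint, which only improves the non-boundary terms; your coarser decomposition with $\|\widetilde{d\sigma_\form}\|_\infty\lesssim 1$ would in fact reproduce the same final range). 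The genuine gap is in your $L^2\to L^2$ endpoint, which you yourself flag as the main obstacle but for which you name the wrong tool. The quantity you must bound is $\|\FT{\T^d}(F_s)\|_{\ell^\infty(\Z^d)}$, i.e.\ a pointwise bound on the physical-space kernel, which (per fraction, via the identity of Proposition~\ref{proposition:main_term:l1tolinfty}) is a copy of $d\sigma_\form$ dilated by $N=\lambda^{1/k}$ and mollified at scale $N/2^s$. The sharp bound here, $\lesssim 2^s\lambda^{-1-\frac1k}$ per fraction, is a statement about the \emph{physical-space} regularity of the measure, namely $\sigma_\form(B_r)\lesssim r^{d-1}$, which follows from \eqref{equation:surfacemeasure} and is packaged in the paper as Proposition~\ref{prop:neighborhood} ($\|t^{-d}\dilate_t\phi* d\sigma_\form\|_{L^\infty}\lesssim t^{-1}$). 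The Fourier decay \eqref{estimate:Birch:surface_measure} cannot deliver this: since $\kappa_\form\le d/(2^{k-1}(k-1))\le d/2<d$, converting \eqref{estimate:Birch:surface_measure} into a pointwise kernel bound (necessarily through the $L^1$ norm of the frequency-side bump, i.e.\ exactly the ``crude substitute'' you dismiss) gives per fraction only $\lambda^{\frac{d-k}{k}}N^{1-\kappa_\form}2^{-s(d+1-\kappa_\form)}$, worse than the correct bound by the factor $(N/2^s)^{d+2-\kappa_\form}$. If you run your interpolation with that weaker input, the $\lambda$-exponent comes out as $\frac{d-k}{k}-\frac{2(\kappa_\form-1)}{kp}$, which exceeds the target $\frac{d-k}{k}-\frac{2d}{kp}$ for every admissible $p$ because $\kappa_\form-1<d$; already the $s=0$ term overshoots, so no bookkeeping in $s$ can rescue it. Thus the proposal does not close as written: you need the neighborhood estimate of Proposition~\ref{prop:neighborhood} (or an equivalent ball-growth bound for $d\sigma_\form$), not \eqref{estimate:Birch:surface_measure}, at the $L^2$ endpoint.

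Two smaller remarks. First, the Ramanujan-sum refinement you hope for ($\sum_{q\sim 2^s}c_q(\form(\vector{n})-\lambda)\lesssim_\epsilon 2^s\lambda^\epsilon$ via the divisor bound) fails precisely where the kernel is largest, namely when $\form(\vector{n})=\lambda$, since $c_q(0)=\phi(q)$ gives $\sim 2^{2s}$ with no cancellation; fortunately the trivial count $2^{2s}$ is all that is needed once the correct geometric bound $2^s\lambda^{-1-\frac1k}$ per fraction is in place (this is also all the paper uses, summing over $a/q\in\mathcal{R}_s$ by the triangle inequality). Second, your $L^\infty$ bound $\|F_s\|_{L^\infty(\T^d)}\lesssim_\epsilon\lambda^{\frac{d}{k}-1}2^{s(2-\kappa_\form+\epsilon)}$ is correct and is essentially the boundary case ($j+s=\lfloor\log_2N\rfloor$) of the paper's Lemma~\ref{lemma:Birch:L1toLinfty}; it is exactly this piece, where no decay of $\widetilde{d\sigma_\form}$ is available, that dictates the range $p>2+\frac{4}{\kappa_\form-2}$ in the paper as well.
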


We may deduce Theorem~\ref{theorem:main} once Theorem~\ref{theorem:majorarcs} is proved as follows.  
\begin{proof}[Proof of Theorem~\ref{theorem:main} assuming Theorem~\ref{theorem:majorarcs}]
Since $k \geq 2$, we have $2k/\gamma_\form > 4/(\kappa_\form-2)$, and Lemma~\ref{lemma:prelims:TomasSteinDcp} reduces Theorem~\ref{theorem:main} to applying the major arc bound in Theorem~\ref{theorem:majorarcs} and the (minor arc) bound for the error term \eqref{estimate:error_bound}. 
\end{proof}

\section{Proof of Theorem~\ref{theorem:majorarcs}}\label{section:majorarcs}

Fix $\form(\vector{x}) \in \Z[\vector{x}]$ a positive definite, homogeneous form of degree $k$ satisfying \eqref{Birch_criterion} and $\lambda \in \N$. 
Set $N = \lceil \lambda^{1/k} \rceil$. 
Define the functions 
\begin{align*}
\Psi_j(\vector{\xi}) &:= \Psi(2^j\vector{\xi}) - \Psi(2^{j+1}\vector{\xi}) 
\quad \text{for} \quad 0 \leq j < \lfloor \log_2 N \rfloor, 
\; \text{and} 
\\ 
\Psi_j(\vector{\xi}) &:= \Psi(2^j\vector{\xi})
\quad \text{for} \quad j = \lfloor \log_2 N \rfloor.
\end{align*}
Furthermore, for $q \in \N, a \in U_q$ and $0 \leq j < \lfloor \log_2 N \rfloor$, define the multipliers 
\begin{equation*}
\mu^{a/q,j}_\lambda(\vector{\xi}) 
:= 
\lambda^{\frac{d}{k}-1}\sum_{\vector{m} \in \Z^d} G_{\form}(a,q;\vector{m}) \Psi_j(2^{s+1}[\vector{\xi}-\frac{\vector{m}}{q}]) \cdot \FT{\R^d}{d\sigma_{\form}}(\lambda^{1/k}[\vector{\xi}-\frac{\vector{m}}{q}]).
\end{equation*}
We will collect these multipliers according to the scale of their moduli; to do so, define, for each $s \geq 0$, the set of fractions 
\[
\mathcal{R}_s := \{ a/q \in \Q : 2^s \leq q < 2^{s+1} \; \text{and} \; a \in U_q \}. 
\]

Let $\kernel^{a/q,j}_\lambda := \FT{\T^d}{(\mu^{a/q,j}_\lambda)}$ denote the inverse Fourier transform of $\mu^{a/q,j}_\lambda$. 
We start our proof by establishing an identity for these kernels. 
\begin{proposition}\label{proposition:main_term:l1tolinfty}
Let $\form(\vector{x}) \in \Z[\vector{x}]$ be a positive definite, non-singular, integral, homogeneous form satisfying \eqref{Birch_criterion}, and $\Gamma_{\form}$ be a set of regular values for the form $\form$. 
If $s \geq 0$, then for each $a/q \in \mathcal{R}_s$ we have 
\begin{equation}\label{eq:kernel}
\kernel^{a/q,j}_\lambda(\vector{x}) 
= 
\eof{a\form(\vector{x})/q} \lambda^{-1} [\FT{\R^d}(\dilate_{2^s\lambda^{-1/k}}\Psi_j)*d\sigma_{\form}](\lambda^{-1/k} \vector{x}) 
\end{equation}
for all $\vector{x} \in \Z^d.$
\end{proposition}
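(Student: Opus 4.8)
The plan is to compute $\kernel^{a/q,j}_\lambda = \FT{\T^d}\bigl(\mu^{a/q,j}_\lambda\bigr)$ directly, exploiting the $\tfrac{1}{q}\Z^d$-periodic structure of the multiplier by reorganizing the sum over $\vector{m}\in\Z^d$ into residue classes modulo $q$. Start from $\kernel^{a/q,j}_\lambda(\vector{x}) = \int_{\T^d}\mu^{a/q,j}_\lambda(\vector{\xi})\,\eof{-\vector{x}\cdot\vector{\xi}}\dxi$, write $\vector{m}=\vector{r}+q\vector{l}$ with $\vector{r}\in(\Z/q)^d$, $\vector{l}\in\Z^d$, and use that $G_\form(a,q;\vector{m})$ depends only on $\vector{m}\bmod q$. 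For fixed $\vector{r}$ the $\vector{l}$-sum is a $\Z^d$-periodic superposition of bumps centred at the translates of $\vector{r}/q$; since $\Psi$ is supported in $[-1/8,1/8]^d$ and $q<2^{s+1}$ (this is where $a/q\in\mathcal{R}_s$ enters), the bumps attached to distinct points of $\tfrac{1}{q}\Z^d/\Z^d$ are pairwise disjoint. Integrating such a periodization against $\eof{-\vector{x}\cdot\vector{\xi}}$ over one fundamental domain and using $\eof{-\vector{x}\cdot\vector{l}}=1$ for $\vector{x},\vector{l}\in\Z^d$ unfolds the $\T^d$-integral into an integral over all of $\R^d$ for each residue class; after the translation $\vector{\eta}=\vector{\xi}-\vector{r}/q$ this yields
\[
\kernel^{a/q,j}_\lambda(\vector{x})
= \lambda^{\frac{d}{k}-1}\Bigl(\sum_{\vector{r}\in(\Z/q)^d} G_\form(a,q;\vector{r})\,\eof{-\tfrac{\vector{x}\cdot\vector{r}}{q}}\Bigr)
\int_{\R^d}\Psi_j(2^{s+1}\vector{\eta})\,\FT{\R^d}{d\sigma_\form}(\lambda^{1/k}\vector{\eta})\,\eof{-\vector{x}\cdot\vector{\eta}}\deta .
\]
Interchanging the summations is harmless because, on bounded sets, the $\vector{l}$-sum is finite by compact support of $\Psi_j$ and $\FT{\R^d}{d\sigma_\form}$ is bounded, being the Fourier transform of a compactly supported finite measure.

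Next I would collapse the complete exponential sum in parentheses. Expanding $G_\form(a,q;\vector{r}) = q^{-d}\sum_{\vector{b}\in(\Z/q)^d}\eof{\frac{a\form(\vector{b})+\vector{b}\cdot\vector{r}}{q}}$, exchanging the order of summation, and applying orthogonality — $\sum_{\vector{r}\in(\Z/q)^d}\eof{(\vector{b}-\vector{x})\cdot\vector{r}/q}$ equals $q^d$ when $\vector{b}\equiv\vector{x}\pmod q$ and vanishes otherwise — leaves a single term, and since $\form$ has integer coefficients $\form(\vector{b})\equiv\form(\vector{x})\pmod q$ for that term, so the parenthesized factor is precisely $\eof{a\form(\vector{x})/q}$. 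This produces the arithmetic phase in \eqref{eq:kernel}.

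It then remains to identify $I(\vector{x}):=\int_{\R^d}\Psi_j(2^{s+1}\vector{\eta})\,\FT{\R^d}{d\sigma_\form}(\lambda^{1/k}\vector{\eta})\,\eof{-\vector{x}\cdot\vector{\eta}}\deta$ with the convolution in \eqref{eq:kernel}. Writing $\FT{\R^d}{d\sigma_\form}(\lambda^{1/k}\vector{\eta}) = \int_{\R^d}\eof{\lambda^{1/k}\vector{\eta}\cdot\vector{y}}\,d\sigma_\form(\vector{y})$ via \eqref{equation:surfacemeasure} (Fubini applies, $d\sigma_\form$ being a finite measure and $\Psi_j(2^{s+1}\cdot)$ integrable), interchanging the integrals, and performing the $\vector{\eta}$-integration — which is the $\R^d$-Fourier transform of a rescaled copy of $\Psi_j$ evaluated at $\lambda^{1/k}\vector{y}-\vector{x}$ — leaves an integral in $\vector{y}$ against $d\sigma_\form$. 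Factoring $\lambda^{1/k}\vector{y}-\vector{x} = \lambda^{1/k}(\vector{y}-\lambda^{-1/k}\vector{x})$ and using the scaling rules $\dilate_t g(\vector{z})=g(\vector{z}/t)$ and $\FT{\R^d}(\dilate_a g) = a^{d}\,\dilate_{1/a}(\FT{\R^d}g)$ to absorb the powers of $2^{s}$ and $\lambda^{1/k}$, this becomes $\lambda^{-d/k}\bigl[\FT{\R^d}(\dilate_{2^s\lambda^{-1/k}}\Psi_j)\ast d\sigma_\form\bigr](\lambda^{-1/k}\vector{x})$; here one also uses that $\FT{\R^d}\Psi_j$ is even (as $\Psi$ may be taken even), so that $\int_{\R^d}g(\vector{y}-\vector{w})\,d\sigma_\form(\vector{y})$ is genuinely $(g\ast d\sigma_\form)(\vector{w})$. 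Multiplying the prefactor $\lambda^{\frac{d}{k}-1}$ by this $\lambda^{-d/k}$ gives $\lambda^{-1}$, and combining with the phase $\eof{a\form(\vector{x})/q}$ yields \eqref{eq:kernel}.

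The computation is elementary once conventions are fixed; the step demanding the most care is the unfolding, i.e.\ verifying that integrating the $\Z^d$-periodization of each residue-class piece over $\T^d$ against $\eof{-\vector{x}\cdot\vector{\xi}}$ reproduces the full integral over $\R^d$. This rests on the compact support of $\Psi$, together with the bound $q<2^{s+1}$ from $a/q\in\mathcal{R}_s$, which keeps the bumps near distinct points $\vector{r}/q$ disjoint. The rest is the Gauss-sum orthogonality identity and routine tracking of dilation factors and powers of $\lambda$ through the change of variables.
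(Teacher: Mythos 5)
Your strategy --- unfolding the $\vector{m}$-sum over residues mod $q$ (using the support of $\Psi$ and $q<2^{s+1}$), collapsing $\sum_{\vector{r}\in(\Z/q)^d}G_\form(a,q;\vector{r})\eof{-\vector{x}\cdot\vector{r}/q}$ by orthogonality to the phase $\eof{a\form(\vector{x})/q}$, and then rescaling the remaining oscillatory integral onto the unit surface --- is exactly the argument the paper invokes (it only cites Proposition~1 of \cite{Hughes:improving} with $\Psi\mapsto\Psi_j$, $q\mapsto 2^s$), and those steps are carried out correctly, including the bookkeeping $\lambda^{\frac{d}{k}-1}\cdot\lambda^{-d/k}=\lambda^{-1}$.

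The one step you should not assert without writing out is the final identification of the dilation parameter, and as stated it does not come out the way you claim. With the paper's convention $\dilate_t f=f(\cdot/t)$ and the scaling rule $\FT{\R^d}(\dilate_a g)=a^d\,\dilate_{1/a}(\FT{\R^d}g)$ that you yourself quote, the $\vector{\eta}$-integration in $I(\vector{x})$ gives the factor $2^{-(s+1)d}\,\FT{\R^d}\Psi_j\bigl(2^{-(s+1)}\lambda^{1/k}(\vector{y}-\lambda^{-1/k}\vector{x})\bigr)=\lambda^{-d/k}\,\FT{\R^d}\bigl(\dilate_{2^{-(s+1)}\lambda^{1/k}}\Psi_j\bigr)(\vector{y}-\lambda^{-1/k}\vector{x})$, i.e.\ a mass-$O(1)$ bump concentrated at spatial scale about $2^{s+j}\lambda^{-1/k}$. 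That is the \emph{reciprocal} of the parameter $2^s\lambda^{-1/k}$ you place inside $\dilate$; the expression $\FT{\R^d}(\dilate_{2^s\lambda^{-1/k}}\Psi_j)$ is instead a bump spread over scale $2^{j}\lambda^{1/k}2^{-s}$, and the two are genuinely different functions. Your claimed last line (which reproduces \eqref{eq:kernel} verbatim) therefore only holds if one reads $\dilate_t f=f(t\,\cdot)$ --- the opposite of the convention fixed in Section~\ref{section:notation} --- and up to the harmless $2^s$ versus $2^{s+1}$. This appears to be a convention slip inherited from the cited source rather than a flaw in your method, but you should either carry the exponents through explicitly and record the identity with $\dilate_{2^{-(s+1)}\lambda^{1/k}}$, or flag the convention mismatch; it matters downstream, since which way the dilation goes dictates the scale $t$ one feeds into Proposition~\ref{prop:neighborhood} in the proof of Lemma~\ref{lemma:Birch:L2toL2}.
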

\noindent 
The proof of this proposition follows the proof of Proposition~1 in \cite{Hughes:improving}; in that proof, one replaces $\Psi$ by $\Psi_j$ and $q$ by $2^s$.

Now that we know the structure of our kernel, we will use a circle method decomposition and a further Littlewood--Paley decomposition to arbitrage $L^1(\T^d) \to L^\infty(\T^d)$ and $L^2(\T^d) \to L^2(\T^d)$ estimates and deduce Theorem~\ref{theorem:majorarcs}. 
These bounds are the content of the two following lemmas. 
\begin{lemma}\label{lemma:Birch:L1toLinfty}
Let $\form(\vector{x}) \in \Z[\vector{x}]$ be a positive definite, non-singular, integral, homogeneous form satisfying \eqref{Birch_criterion}, and $\lambda \in \N$. 
If $0 \leq s \leq  \lfloor \log_2 N \rfloor$ and $a/q \in \mathcal{R}_s$, 
then each major arc piece $\mu^{a/q,j}_\lambda$ satisfies 
\begin{equation}\label{estimate:L1toLinfty}
\| \mu^{a/q,j}_\lambda \|_{L^\infty(\T^d)}
\lesssim_\epsilon 
2^{j-s} 2^{j(\epsilon-\kappa)} \lambda^{\frac{d}{k}-\kappa}
\quad \text{for} \quad 0 \leq j \leq \lfloor \log N \rfloor -s, 
\end{equation}
and 
\begin{equation}\label{eq:L1toLinfty}
\| \mu^{a/q,j}_\lambda \|_{L^\infty(\T^d)}
\lesssim_\epsilon 
2^{s(\epsilon-\kappa)} \lambda^{\frac{d}{k}-1}
\quad \text{for} \quad j = \lfloor \log N \rfloor - s
\end{equation}
for all $\epsilon>0$.
\end{lemma}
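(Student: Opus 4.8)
To prove Lemma~\ref{lemma:Birch:L1toLinfty} the plan is to obtain a pointwise bound for $\mu^{a/q,j}_\lambda$ on $\T^d$, after observing that at any given point only one summand over $\vector{m}$ can be nonzero.

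The first ingredient is the geometry of the cutoffs. Since $\Psi$ is supported in $[-1/8,1/8]^d$ and equals $1$ on $[-1/16,1/16]^d$, the Littlewood--Paley piece $\Psi_j(\vector{\eta}) = \Psi(2^j\vector{\eta}) - \Psi(2^{j+1}\vector{\eta})$ vanishes unless $|\vector{\eta}| \eqsim 2^{-j}$, while the top piece $\Psi_{\lfloor\log_2 N\rfloor}(\vector{\eta}) = \Psi(2^{\lfloor\log_2 N\rfloor}\vector{\eta})$ is supported in $\{|\vector{\eta}| \lesssim 2^{-\lfloor\log_2 N\rfloor}\}$. Rescaling by $2^{s+1}$, the cutoff $\Psi_j(2^{s+1}[\vector{\xi}-\vector{m}/q])$ is then supported in a cube of sidelength $\lesssim 2^{-j-s}$ about $\vector{m}/q$ — and, away from the top scale, also vanishes on a cube of sidelength $\eqsim 2^{-j-s}$ about $\vector{m}/q$ — where $\vector{m}/q$ denotes the representative in $\T^d$ nearest to $\vector{\xi}$. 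Because $2^s \le q < 2^{s+1}$, the points $\vector{m}/q$ for distinct residues $\vector{m} \bmod q\Z^d$ are more than $2^{-s-1}$ apart in the sup-metric, which exceeds the diameter $\lesssim 2^{-s-3}$ of these support cubes; hence the supports are pairwise disjoint, and for each fixed $\vector{\xi}$ at most one $\vector{m} = \vector{m}_0$ contributes to the sum defining $\mu^{a/q,j}_\lambda(\vector{\xi})$ (if none does, $\mu^{a/q,j}_\lambda(\vector{\xi}) = 0$ and there is nothing to prove).

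Granting this, for the surviving term I would estimate factor by factor,
\[
|\mu^{a/q,j}_\lambda(\vector{\xi})| \le \lambda^{\frac{d}{k}-1}\,|G_{\form}(a,q;\vector{m}_0)|\,|\Psi_j(2^{s+1}[\vector{\xi}-\tfrac{\vector{m}_0}{q}])|\,|\widetilde{d\sigma_{\form}}(\lambda^{1/k}[\vector{\xi}-\tfrac{\vector{m}_0}{q}])| ,
\]
using the Birch--Weyl bound \eqref{eq:Birch:Weyl_sum}, which gives $|G_{\form}(a,q;\vector{m}_0)| \lesssim_\epsilon q^{\epsilon-\kappa} \eqsim 2^{s(\epsilon-\kappa)}$; the trivial bound $|\Psi_j| \lesssim 1$; and the surface-measure decay \eqref{estimate:Birch:surface_measure} together with the support constraint $|\lambda^{1/k}[\vector{\xi}-\vector{m}_0/q]| \eqsim \lambda^{1/k}2^{-j-s}$ (valid off the top scale; at the top scale the argument is $\lesssim \lambda^{1/k}2^{-j-s}$ and one additionally uses $|\widetilde{d\sigma_{\form}}| \lesssim 1$, which holds since $d\sigma_{\form}$ is finite and compactly supported) and the negativity of the exponent $1-\kappa$ (recall $\kappa>2$ by the dimension hypothesis). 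This yields the uniform pointwise bound
\[
|\mu^{a/q,j}_\lambda(\vector{\xi})| \lesssim_\epsilon \lambda^{\frac{d}{k}-1}\, 2^{s(\epsilon-\kappa)}\,(1+\lambda^{1/k}2^{-j-s})^{1-\kappa+\epsilon}
\]
for all $0 \le j \le \lfloor\log_2 N\rfloor - s$. In that range $2^{j+s} \le 2^{\lfloor\log_2 N\rfloor} \le N \eqsim \lambda^{1/k}$, so $1+\lambda^{1/k}2^{-j-s} \eqsim \lambda^{1/k}2^{-j-s} \gtrsim 1$; substituting this and rearranging the powers of $2^s$, $2^j$ and $\lambda$ (with $N \eqsim \lambda^{1/k}$) gives \eqref{estimate:L1toLinfty}, and at the top scale $j=\lfloor\log_2 N\rfloor - s$, where $\lambda^{1/k}2^{-j-s} \eqsim 1$, it collapses to \eqref{eq:L1toLinfty}.

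I do not expect a serious obstacle: the whole argument is a single pointwise estimate once the sum over $\vector{m}$ is seen to collapse to one term. The two points needing care are the elementary separation numerology $2^{-s-3} < 2^{-s-1} < 1/q$ underlying the disjointness of the supports, and faithfully tracking the localization scale $2^{-j-s}$ through the dilation by $\lambda^{1/k}$, so that $\widetilde{d\sigma_{\form}}$ is evaluated in its genuine decay regime precisely in the range $j \le \lfloor\log_2 N\rfloor - s$ and at size $\eqsim 1$ at the endpoint, where the two stated bounds meet.
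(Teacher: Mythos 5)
Your argument is the same as the paper's: at each $\vector{\xi}$ at most one $\vector{m}$ survives (your separation numerology, which the paper leaves implicit, is correct), and then one multiplies the Weyl bound \eqref{eq:Birch:Weyl_sum} by the decay \eqref{estimate:Birch:surface_measure} evaluated on the support annulus $|\vector{\xi}-\vector{m}/q| \eqsim 2^{-j-s}$. This gives exactly the first display of the paper's proof,
\[
\| \mu^{a/q,j}_\lambda \|_{L^\infty(\T^d)}
\lesssim_\epsilon
\lambda^{\frac{d}{k}-1}\, 2^{s(\epsilon-\kappa)}\, \bigl(\lambda^{1/k}2^{-j-s}\bigr)^{1-\kappa+\epsilon},
\]
and your treatment of the endpoint case \eqref{eq:L1toLinfty} is fine. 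The gap is your closing claim that ``rearranging the powers'' of $2^s$, $2^j$ and $\lambda$ turns this into \eqref{estimate:L1toLinfty}: it does not. The displayed bound equals $2^{-s}\,2^{j(\kappa-1-\epsilon)}\,\lambda^{\frac{d+1-\kappa+\epsilon}{k}-1}$, while the right-hand side of \eqref{estimate:L1toLinfty} is $2^{-s}\,2^{j(1+\epsilon-\kappa)}\,\lambda^{\frac{d}{k}-\kappa}$; the ratio of the former to the latter is $\lambda^{(\kappa-1)(1-\frac{1}{k})+\frac{\epsilon}{k}}\,2^{2j(\kappa-1-\epsilon)} \ge 1$, and it is a positive power of $\lambda$ as soon as $k\ge 2$ and $\kappa>2$. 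Using the constraint $2^{j+s}\lesssim N \eqsim \lambda^{1/k}$ does not close this; the inequality simply goes the wrong way, both in the $2^j$-exponent (your bound grows in $j$, the target decays) and in the $\lambda$-exponent.

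In fairness, this is not a divergence from the paper: the paper's own proof asserts the identical passage in one line (``$\lesssim_\epsilon 2^{j-s}2^{j(\epsilon-\kappa)}\lambda^{\frac{d}{k}-\kappa}$ \dots since $\kappa>2$''), so you have reproduced its argument, adding the support-disjointness details it omits but inheriting its real defect. Indeed \eqref{estimate:L1toLinfty} as stated looks too strong to be reachable from \eqref{estimate:Birch:surface_measure} at all: at $j+s$ just below $\lfloor\log_2 N\rfloor$ it is smaller than the adjacent endpoint bound \eqref{eq:L1toLinfty} by a large power of $\lambda$, and for the sphere with $j=s=0$ it would force $\|\mu^{a/q,0}_\lambda\|_\infty\lesssim 1$, whereas the multiplier there has size about $\lambda^{\frac{d}{k}-1}\lambda^{\frac{1-\kappa}{k}}$. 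So the honest conclusion of your (and the paper's) method is the intermediate bound displayed above, and either the statement of \eqref{estimate:L1toLinfty} needs to be corrected to that form or a genuinely new input is required; your proposal, like the paper's proof, does not supply one.
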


\begin{lemma}\label{lemma:Birch:L2toL2}
Let $\form(\vector{x}) \in \Z[\vector{x}]$ be a positive definite, non-singular, integral, homogeneous form satisfying \eqref{Birch_criterion}, and $\lambda \in \N$. 
If $0 \leq s \leq  \lfloor \log_2 N \rfloor$ and $a/q \in \mathcal{R}_s$, 
then each major arc piece $\mu^{a/q,j}_\lambda$ satisfies 
\begin{equation}\label{eq:L2toL2}
\| \FT{\T^d}{\mu^{a/q,j}_\lambda} \|_{\ell^\infty(\Z^d)}
\lesssim
2^{j+s} \lambda^{-1-\frac{1}{k}}
\end{equation}
for $0 \leq j \leq \lfloor \log N \rfloor - s$.
\end{lemma}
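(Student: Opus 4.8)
\textbf{Proof proposal for Lemma~\ref{lemma:Birch:L2toL2}.}

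The plan is to exploit the explicit formula for the kernel $\kernel^{a/q,j}_\lambda = \FT{\T^d}{\mu^{a/q,j}_\lambda}$ provided by Proposition~\ref{proposition:main_term:l1tolinfty}, since $\|\FT{\T^d}{\mu^{a/q,j}_\lambda}\|_{\ell^\infty(\Z^d)}$ is precisely $\sup_{\vector{x} \in \Z^d} |\kernel^{a/q,j}_\lambda(\vector{x})|$. By \eqref{eq:kernel}, the exponential factor $\eof{a\form(\vector{x})/q}$ has modulus one, so it suffices to bound
\[
\lambda^{-1} \sup_{\vector{y} \in \R^d} \bigl| [\FT{\R^d}(\dilate_{2^s\lambda^{-1/k}}\Psi_j) * d\sigma_\form](\vector{y}) \bigr|
\le
\lambda^{-1} \|\FT{\R^d}(\dilate_{2^s\lambda^{-1/k}}\Psi_j)\|_{L^\infty(\R^d)} \cdot \|d\sigma_\form\|,
\]
where $\|d\sigma_\form\|$ is the total mass of the (compactly supported, finite) Gelfand--Leray measure, a constant depending only on $\form$. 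Wait — the convolution of an $L^\infty$ function with a finite measure is bounded by the product of the $L^\infty$ norm and the total variation, so this is the route. The remaining task is to estimate $\|\FT{\R^d}(\dilate_{2^s\lambda^{-1/k}}\Psi_j)\|_{L^\infty}$.

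Next I would compute the effect of the dilation on the Fourier transform. Since $\dilate_t f(\vector{x}) = f(\vector{x}/t)$, we have $\FT{\R^d}(\dilate_t f) = t^d \, \dilate_{1/t}(\FT{\R^d}f)$ (up to the paper's sign conventions, which only permute coordinates and do not affect $L^\infty$ norms), hence $\|\FT{\R^d}(\dilate_t f)\|_{L^\infty} = t^d \|\FT{\R^d}f\|_{L^\infty} = t^d \|f\|_{L^1}$ when $\FT{\R^d}f \ge 0$ at its max, or more robustly $\le t^d \|f\|_{L^1(\R^d)}$. Here $t = 2^s \lambda^{-1/k}$, so $t^d = 2^{sd}\lambda^{-d/k}$, and $\|\Psi_j\|_{L^1(\R^d)} \lesssim 2^{-jd}$ since $\Psi_j$ is a Littlewood--Paley piece adapted to frequency scale $2^{-j}$ (it is supported in a ball of radius $\sim 2^{-j}$ and is $O(1)$ there). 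Combining, the kernel bound becomes
\[
\lambda^{-1} \cdot 2^{sd}\lambda^{-d/k} \cdot 2^{-jd} \cdot \|d\sigma_\form\|
\lesssim_\form
\lambda^{-1} 2^{(s-j)d} \lambda^{-d/k}.
\]

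This does not match the claimed $2^{j+s}\lambda^{-1-1/k}$, which suggests the crude $L^\infty \le L^1$ bound is too lossy and one must instead use decay of $\widetilde{d\sigma_\form}$. So the better approach: pass to the multiplier side directly. By definition $\mu^{a/q,j}_\lambda(\vector{\xi}) = \lambda^{d/k-1}\sum_{\vector{m}} G_\form(a,q;\vector{m}) \Psi_j(2^{s+1}[\vector{\xi}-\vector{m}/q]) \FT{\R^d}{d\sigma_\form}(\lambda^{1/k}[\vector{\xi}-\vector{m}/q])$; for fixed $\vector{\xi}$ at most $O(1)$ terms $\vector{m}$ contribute since $\Psi_j(2^{s+1}\,\cdot\,)$ is supported in a ball of radius $\sim 2^{-(s+1+j)} \ll 1/q$. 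On that support $|\vector{\xi}-\vector{m}/q| \sim 2^{-(s+j)}$, so $|\FT{\R^d}{d\sigma_\form}(\lambda^{1/k}[\vector{\xi}-\vector{m}/q])| \lesssim (1+\lambda^{1/k}2^{-(s+j)})^{1-\kappa+\epsilon}$ by \eqref{estimate:Birch:surface_measure}, and $\|\FT{\T^d}{\mu^{a/q,j}_\lambda}\|_{\ell^\infty} \le \|\mu^{a/q,j}_\lambda\|_{L^1(\T^d)}$; the $L^1$ norm over each arc is the measure of the support, $\sim 2^{-(s+j)d}$, times the sup $\lambda^{d/k-1} q^{\epsilon-\kappa} (1+\lambda^{1/k}2^{-(s+j)})^{1-\kappa+\epsilon}$, summed over the $\sim q \cdot |U_q| \lesssim 2^{2s}$ fractions — but with a single fixed $a/q$ only one arc is present. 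In the regime $0 \le j \le \lfloor \log N\rfloor - s$ we have $\lambda^{1/k}2^{-(s+j)} \ge 1$, so $(1+\lambda^{1/k}2^{-(s+j)})^{1-\kappa} \sim (\lambda^{1/k}2^{-(s+j)})^{1-\kappa}$, giving
\[
\|\mu^{a/q,j}_\lambda\|_{L^1}
\lesssim_\epsilon
2^{-(s+j)d}\,\lambda^{d/k-1}\,q^{\epsilon-\kappa}\,(\lambda^{1/k}2^{-(s+j)})^{1-\kappa+\epsilon}.
\]
The hard part will be reconciling the exponents: the target $2^{j+s}\lambda^{-1-1/k}$ has a \emph{positive} power of $2^{s+j}$ whereas every factor above carries a negative power, which means the stated lemma must actually be using the \emph{Poisson summation / kernel} side after all, with the kernel evaluated at lattice points $\vector{x} \in \Z^d$, not the trivial $\ell^\infty \le L^1$ bound. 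I therefore expect the real argument to run through \eqref{eq:kernel}, to use that $\FT{\R^d}(\dilate_{2^s\lambda^{-1/k}}\Psi_j)$ is an $L^1$-normalized bump concentrated in a ball of radius $\sim \lambda^{1/k}2^{-(s+j)}$ with height $\sim (2^{s-j}\lambda^{-1/k})^{\,?}$, to convolve against the surface measure supported on the dilated hypersurface of radius $\sim \lambda^{1/k}$, and then to rescale back by $\lambda^{-1/k}$; carefully tracking the normalization of $d\sigma_\form$ (which has a $|\nabla\form|^{-1}$ weight, contributing a factor $\lambda^{-(1-1/k)}$ on the dilated surface) should produce exactly $2^{j+s}\lambda^{-1-1/k}$. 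The main obstacle is thus purely bookkeeping of the three scaling parameters $2^s$, $2^j$ and $\lambda^{1/k}$ through the dilation identity $\FT{\R^d}(\dilate_t f) = t^d \dilate_{1/t}\FT{\R^d}f$ and the homogeneity $d\sigma_\form(\dilate_R \,\cdot\,) = R^{d-k}\,\dilate_R d\sigma_\form$ of the Gelfand--Leray form; no new analytic input beyond \eqref{eq:kernel} and elementary estimates on $\Psi_j$ should be needed.
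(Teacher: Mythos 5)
Your submission stops short of a proof: the route you finally endorse (going through \eqref{eq:kernel} and convolving a rescaled bump against $d\sigma_\form$) is left as a sketch with an undetermined exponent (``height $\sim(2^{s-j}\lambda^{-1/k})^{\,?}$'') and the claim that only bookkeeping remains. That is not quite right: the paper inserts a genuine analytic ingredient at exactly this point, namely Proposition~\ref{prop:neighborhood}, the estimate $\| u^{-d}\dilate_{u}\phi * d\sigma_\form \|_{L^\infty(\R^d)} \lesssim u^{-1}$ for Schwartz $\phi$, which is proved from the ball estimate $\sigma_\form(B_r)\lesssim r^{d-1}$ for the Gelfand--Leray measure on the compact hypersurface. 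Writing $\FT{\R^d}(\dilate_{2^s\lambda^{-1/k}}\Psi_j)=u^{-d}\dilate_{u}\phi_0$ with $u=2^{j-s}\lambda^{1/k}$ and $\phi_0$ Schwartz uniformly in $j$, identity \eqref{eq:kernel} then gives $|\kernel^{a/q,j}_\lambda(\vector{x})|\lesssim \lambda^{-1}u^{-1}=2^{s-j}\lambda^{-1-\frac{1}{k}}\leq 2^{s+j}\lambda^{-1-\frac{1}{k}}$, which is \eqref{eq:L2toL2}. So the missing step is this neighborhood estimate (or an equivalent), which your sketch neither states nor proves; your heuristic that the $|\nabla\form|^{-1}$ weight contributes a factor $\lambda^{-(1-1/k)}$ is not how the scales actually combine. (Your second route, bounding $\|\mu^{a/q,j}_\lambda\|_{L^1(\T^d)}$, is indeed hopeless here, as you recognized.)

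The ironic part is that the bound you computed first and then discarded already proves the lemma. From $\|\FT{\R^d}(\dilate_t\Psi_j)\|_{L^\infty}\leq\|\dilate_t\Psi_j\|_{L^1}\lesssim (t2^{-j})^d$ and the finiteness of $d\sigma_\form$ you obtained $|\kernel^{a/q,j}_\lambda(\vector{x})|\lesssim \lambda^{-1}\,(2^{s-j}\lambda^{-1/k})^d$, and you rejected this because it ``does not match'' the target. But an upper bound only needs to dominate the target, and in the stated range $0\leq j\leq \lfloor\log_2 N\rfloor-s$ one has $2^{s-j}\lambda^{-1/k}\leq 2^{s+j}\lambda^{-1/k}\lesssim 1$, hence $(2^{s-j}\lambda^{-1/k})^d\lesssim_d 2^{s-j}\lambda^{-1/k}\leq 2^{s+j}\lambda^{-1/k}$, so your crude bound is in fact at least as strong as \eqref{eq:L2toL2}. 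Had you kept that computation and made this comparison, you would have had a complete proof that is even simpler than the paper's (no need for Proposition~\ref{prop:neighborhood}); as submitted, however, the argument you actually advance is incomplete.
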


\begin{remark}
Note that $j+s = \lfloor \log_2 N \rfloor$ is the natural cut-off because when $|\vector{\xi}| \lesssim \lambda^{1/k}$ since we do not capture any oscillation in $\FT{\R^d}d\sigma(\lambda^{1/k}\vector{\xi})$ when $|\vector{\xi}| \lesssim \lambda^{1/k}$. 
\end{remark}

\begin{proof}[Proof of Lemma~\ref{lemma:Birch:L1toLinfty}]
Fix $0 \leq s \leq \lfloor \log_2 N \rfloor$ and $a/q \in \mathcal{R}_s$. 
For $0 \leq j < \lfloor \log N \rfloor -s$, \eqref{estimate:Birch:surface_measure} implies that 
\[
\| \mu^{a/q,j} \|_{L^\infty(\T^d)} 
\lesssim_\epsilon 
\lambda^{\frac{d}{k}-1} (2^s)^{\epsilon-\kappa} (\lambda^{1/k}/2^{s+j})^{1-\kappa+\epsilon}
\lesssim_\epsilon
2^{j-s} 2^{j(\epsilon-\kappa)} \lambda^{\frac{d}{k}-\kappa}
\]
for all $\epsilon>0$ since $\kappa>2$. 
For $j = \lfloor \log N \rfloor - s$, \eqref{estimate:Birch:surface_measure} implies that 
\[
\| \mu^{a/q,j} \|_{L^\infty(\T^d)} 
\lesssim_\epsilon
2^{s(\epsilon-\kappa)} \lambda^{\frac{d}{k}-1}.
\]
\end{proof}


Before proving Lemma~\ref{lemma:Birch:L2toL2}, we need a geometric property of our measures $d\sigma_\form$. 
The estimate below is best known for $\form(\vector{x}) = |\vector{x}|^2$; see \cite{Grafakos} for this estimate. 
However, we are unaware of a reference for more general hypersurfaces aside from estimate (23) in \cite{Hughes:improving}. 
For completeness, we include the statement and its proof below. 
\begin{proposition}\label{prop:neighborhood}
Let $\phi$ be a Schwartz function on $\R^d$. 
If $t>0$, then 
\begin{equation}\label{estimate:neighborhood}
\| t^{-d}\dilate_t\phi * d\sigma_Q \|_{L^\infty(\R^d)} 
\lesssim t^{-1}. 
\end{equation}
\end{proposition}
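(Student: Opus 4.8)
The plan is to reduce the estimate to a standard bound for the mass that the surface measure $d\sigma_\form$ (this is the measure written $d\sigma_Q$ in the statement) puts on balls. Since $\form$ is homogeneous of degree $k$, Euler's identity gives $\vector{x}\cdot\nabla\form(\vector{x}) = k\,\form(\vector{x})$; in particular $\nabla\form(\vector{x})\neq\vector{0}$ at every point of the compact set $S:=\{\vector{x}\in\R^d:\form(\vector{x})=1\}$, so $S$ is a smooth embedded hypersurface on which $|\nabla\form|$ is bounded above and below. By \eqref{equation:surfacemeasure}, $d\sigma_\form = dS_\form/|\nabla\form|$ is therefore a finite smooth measure carried by $S$ whose density relative to the Euclidean surface measure $dS_\form$ is comparable to $1$. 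Morally, $t^{-d}\dilate_t\phi * d\sigma_\form$ concentrates in the $t$-neighbourhood of $S$, where it has size of order $t^{-1}$; this is what we make precise.

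The geometric input I would establish is the uniform ball estimate
\[
\sigma_\form\big(B(\vector{x},r)\big) \lesssim_\form r^{d-1} \qquad \text{for all } \vector{x}\in\R^d \text{ and all } r>0,
\]
where $\sigma_\form(\cdot)$ denotes the mass of $d\sigma_\form$ on a set and $B(\vector{x},r)$ is the Euclidean ball. If $B(\vector{x},r)\cap S=\emptyset$ there is nothing to prove; otherwise fix $\vector{y}_0\in B(\vector{x},r)\cap S$ and use $B(\vector{x},r)\cap S\subseteq B(\vector{y}_0,2r)\cap S$ to reduce to centres on $S$. For $r\gtrsim_\form 1$ one simply uses $\sigma_\form(B(\vector{x},r))\leq\sigma_\form(S)\lesssim_\form 1\lesssim r^{d-1}$, since $d\geq 2$. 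For small $r$ one covers the compact surface $S$ by finitely many coordinate patches in each of which $S$ is the graph of a smooth function over a coordinate hyperplane; on such a patch $d\sigma_\form$ is comparable to the pull-back of $(d-1)$-dimensional Lebesgue measure from the base, and the base projection of $B(\vector{y}_0,2r)\cap S$ lies in a $(d-1)$-ball of radius $2r$, hence has measure $\lesssim r^{d-1}$.

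With the ball estimate in hand, the proof is a dyadic decomposition together with the Schwartz decay of $\phi$. For fixed $\vector{x}$ we have
\[
\big|(t^{-d}\dilate_t\phi * d\sigma_\form)(\vector{x})\big|
\leq \int_S t^{-d}\,\Big|\phi\Big(\tfrac{\vector{x}-\vector{y}}{t}\Big)\Big|\,d\sigma_\form(\vector{y}),
\]
and, choosing $N>d-1$, $|\phi(\vector{z})|\lesssim_N(1+|\vector{z}|)^{-N}$. Splitting the integral over the shells $\{|\vector{x}-\vector{y}|<t\}$ and $\{2^{j-1}t\leq|\vector{x}-\vector{y}|<2^jt\}$ for $j\geq 1$, the ball estimate bounds the contribution of the $j$-th shell by a constant times $t^{-d}\,2^{-jN}(2^jt)^{d-1}=t^{-1}2^{-j(N-d+1)}$; summing the geometric series over $j\geq 0$ gives $\lesssim_\form t^{-1}$, uniformly in $\vector{x}$ and $t>0$, which is \eqref{estimate:neighborhood}.

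The only genuine obstacle is the uniform ball estimate in the second step. Once it is phrased as a statement about graphs over coordinate hyperplanes it is routine, but one must keep the implicit constant independent of the centre (handled by translating the centre onto $S$) and of the scale $r$ (handled by treating $r\lesssim 1$ and $r\gtrsim 1$ separately, using positive definiteness of $\form$ for the compactness of $S$ in the latter case). Everything else is bookkeeping with Schwartz tails.
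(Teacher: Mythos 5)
Your proof is correct and follows essentially the same route as the paper's: the uniform ball bound $\sigma_\form(B(\vector{x},r))\lesssim r^{d-1}$, followed by a dyadic-shell decomposition around the point and summation against the Schwartz tails of $\phi$. The only difference is that you supply a proof of the ball bound (compactness of $\{\form=1\}$, nonvanishing of $\nabla\form$ there via Euler's identity, and local graph coordinates), whereas the paper simply asserts it as a consequence of \eqref{equation:surfacemeasure}; this extra detail does not change the argument.
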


\begin{proof}
Since $\form$ is positive definite, the variety $V_\form(\R)$ is compact. 
Moreover, \eqref{equation:surfacemeasure} implies that for every ball $B$ of radius $r>0$ we have 
\begin{equation}\label{estimate:balls}
\sigma(B) \lesssim r^{d-1}.
\end{equation}
For each point $\vector{x} \in \R^d$, define the sets $S_0(\vector{x}) := \{ \vector{y} \in \R^d : |\vector{x}-\vector{y}| < t \}$ and $S_j(\vector{x}) := \{ \vector{y} \in \R^d : 2^jt \leq |\vector{x}-\vector{y}| < 2^{j+1}t \}$ for $j \in \N$. 
By \eqref{estimate:balls} we have that 
\begin{equation}\label{estimate:smoothballs}
\sigma(S_j(\vector{x})) \lesssim (2^jt)^{d-1} 
\end{equation} 
for each $\vector{x} \in \R^d$. 

Since $\phi$ is Schwartz, we have 
\[
\dilate_t\phi(\vector{x}) 
\lesssim_\phi 
(1+|\vector{x}/t|)^{-M}
\]
for all $M \in \N$. 
Therefore, 
\[
\dilate_t\phi * d\sigma_\form(\vector{x})
\lesssim 
(1+|\cdot/t|)^{-M} * d\sigma_\form(\vector{x})
\]
for all $\vector{x} \in \R^d$. 
Decomposing $\R^d$ into the sets $S_j(\vector{x})$ we have 
\begin{align*}
\dilate_t\phi * d\sigma_\form(\vector{x})
& \lesssim_{\phi,M} 
\int_{\R^d} (1+|\vector{x}-\vector{y}|/t)^{-M} \; d\sigma(\vector{y})
\\ & \lesssim \sum_{j=0}^\infty \int_{S_j(\vector{x})} (1+|\vector{y}|/t)^{-M} \; d\sigma(\vector{y})
\\ & \lesssim \sum_{j=0}^\infty \int_{S_j(\vector{x})} 2^{-jM} \; d\sigma(\vector{y})
\end{align*}
Using estimate \eqref{estimate:smoothballs} we obtain that 
\begin{align*}
\dilate_t\phi * d\sigma_\form(\vector{x})
& \lesssim_{\phi,M} 
\lesssim \sum_{j=0}^\infty \sigma_\form(S_j(\vector{x})) 2^{-jM}
\lesssim \sum_{j=0}^\infty (2^jt)^{d-1} 2^{-jM}
\lesssim t^{d-1}.
\end{align*}
Normalizing by $t^{-d}$ we obtain the desired estimate. 
\end{proof}

\begin{proof}[Proof of Lemma~\ref{lemma:Birch:L2toL2}]
Fix $0 \leq s \leq \lfloor \log_2 N \rfloor$ and $a/q \in \mathcal{R}_s$. 
For each $0 \leq j \leq \lfloor \log N \rfloor - s$, identity \eqref{eq:kernel} and estimate \eqref{estimate:neighborhood} imply that for each $\vector{x} \in \Z^d$ we have 
\begin{align*}
\mu^{a/q,j}_\lambda(\vector{x}) 
& \lesssim_d 
2^j (\lambda^{1/k}/2^s)^{-1} \lambda^{-1} 
\lesssim_d 
2^{j+s} \lambda^{-1-\frac{1}{k}}
\end{align*}
by taking $\phi = \FT{\R^d}(\dilate{2^s\lambda^{-1/k}}\psi_j)$ and $t = \lambda^{1/k}2^{-s}$ in Proposition~\ref{prop:neighborhood}. 
\end{proof}

\begin{proof}[Proof of Theorem~\ref{theorem:majorarcs}]
Let $1 \leq p \leq 2$ and $f \in L^{p'}(\T^d)$ be normalized so that $\|f\|_{L^{p'}(\T^d)}=1$.
Interpolating the bounds \eqref{estimate:L1toLinfty} and \eqref{eq:L2toL2} for $\mu^{a/q,j}_\lambda$ when $0 \leq j+s < \lfloor \log_2 N \rfloor$, we obtain 
\begin{align*}
\| \mu^{a/q,j}_\lambda \ast f \|_{p} 
& \lesssim_\epsilon 
\left( 2^{j+s} \lambda^{-1-\frac{1}{k}} \right)^{\frac{2}{p}} \cdot \left( 2^{j-s} 2^{j(\epsilon-\kappa)} \lambda^{\frac{d}{k}-\kappa} \right)^{1-\frac{2}{p}} 
\\ & = 
2^{j(\frac{2}{p}+(1+\epsilon-\kappa)(1-\frac{2}{p}))} \cdot 2^{s(\frac{2}{p}-1+\frac{2}{p})} \cdot \lambda^{(\frac{d}{k}-\kappa)(1-\frac{2}{p})-\frac{2}{p}(1+\frac{1}{k})}
\\ & = 
2^{j(1+(\epsilon-\kappa)(1-\frac{2}{p}))} \cdot 2^{s(\frac{4}{p}-1)} \cdot \lambda^{(\frac{d}{k}-\kappa)(1-\frac{2}{p})-\frac{2}{p}(1+\frac{1}{k})}
.
\end{align*}
Summing over fractions $a/q \in R_s$ for $j \leq s < \lfloor \log_2 N \rfloor$, we find that 
\[
\left\| \left( \sum_{a/q \in R_s} \mu^{a/q,j}_\lambda(\vector{x}) \right) \ast f \right\|_{L^p(\T^d)}
\lesssim_{\form,\epsilon} 
2^{j(1+(\epsilon-\kappa)(1-\frac{2}{p}))} \cdot 2^{s(\frac{4}{p}+1)} \cdot \lambda^{(\frac{d}{k}-\kappa)(1-\frac{2}{p})-\frac{2}{p}(1+\frac{1}{k})}.
\]
Provided $1-\kappa(1-\frac{2}{p}) < 0$, which is equivalent to the range $p>2+\frac{2}{\kappa-1}$, we have 
\[
\left\| \left( \sum_{j=0}^{\lfloor \log_2N \rfloor - s-1} \sum_{a/q \in R_s} \mu^{a/q,j}_\lambda(\vector{x}) \right) \ast f \right\|_{L^p(\T^d)}
\lesssim_{\form,\epsilon} 
2^{s(\frac{4}{p}+1)} \cdot \lambda^{(\frac{d}{k}-\kappa)(1-\frac{2}{p})-\frac{2}{p}(1+\frac{1}{k})}.
\]
Consequently, when $p>2+\frac{2}{\kappa-1}$, we have 
\[
\left\| \left( \sum_{s=0}^{\lfloor \log_2N \rfloor} \sum_{j=0}^{\lfloor \log_2N \rfloor - s-1} \sum_{a/q \in R_s} \mu^{a/q,j}_\lambda(\vector{x}) \right) \ast f \right\|_{L^p(\T^d)}
\lesssim_{\form,p} 
\lambda^{(\frac{4}{p}+1)/k} \cdot \lambda^{(\frac{d}{k}-\kappa)(1-\frac{2}{p})-\frac{2}{p}(1+\frac{1}{k})}
= 
\]
Comparing the exponent of $\lambda$ with the desired one of $\frac{d}{k}-1-\frac{2d}{kp}$ we find that we have \eqref{theorem:majorarcs} for $p>2+\frac{4}{k\kappa-\kappa-1}$. 
This is better than the range of $p>2+\frac{4}{\kappa-2}$ claimed in the theorem. 

When $0 \leq j+s = \lfloor \log_2 N \rfloor$, we have 
\[
\| \mu^{a/q,j}_\lambda \ast f \|_{p} 
\lesssim_\epsilon 
\left( \lambda^{-1} \right)^{\frac{2}{p}} \cdot \left( 2^{s(\epsilon-\kappa)} \lambda^{\frac{d}{k}-1} \right)^{1-\frac{2}{p}} 
= 
2^{s(\epsilon-\kappa)(1-\frac{2}{p})} \cdot \lambda^{\frac{d}{k}-1-\frac{2d}{kp}}
.
\]
Summing over $0 \leq s \leq \lfloor \log_2 N \rfloor$, we find that 
\[
\| \sum_{s=0}^{\lfloor\log_2N\rfloor} \mu^{a/q,j}_\lambda \ast f \|_{p} 
\lesssim 
\lambda^{\frac{d}{k}-1-\frac{2d}{kp}}
\]
provided that $(\epsilon-\kappa)(1-\frac{2}{p}) < 0$ for arbitrarily small, positive $\epsilon$. 
For each $0<\epsilon<\kappa-2$, this is equivalent to the range of $p>\frac{2(\kappa-\epsilon)}{\kappa-2-\epsilon}$. 
Thereby taking $\epsilon$ to 0, we arrive at the range of $p>\frac{2\kappa}{\kappa-2} = 2+\frac{4}{\kappa-2}$, as claimed. \end{proof}

%

\bibliographystyle{amsalpha}

\end{document}